\def\sqr#1#2{{\vcenter{\vbox{\hrule height.#2pt
				\hbox{\vrule width.#2pt height#1pt \kern#1pt \vrule width.#2pt}
				\hrule height.#2pt}}}}
\def\5n{\negthinspace \negthinspace \negthinspace \negthinspace \negthinspace }
\def\4n{\negthinspace \negthinspace \negthinspace \negthinspace }
\def\3n{\negthinspace \negthinspace \negthinspace }
\def\2n{\negthinspace \negthinspace }
\def\1n{\negthinspace }
\def\dbE{\mathbb{E}}
\def\dbP{\mathbb{P}}
\def\dbR{\mathbb{R}}
\def\dbT{\mathbb{T}}
\def\dbV{\mathbb{V}}
\def\sB{\mathscr{B}}
\def\sY{\mathscr{Y}}
\def\cA{{\cal A}}
\def\cH{{\cal H}}
\def\cJ{{\cal J}}
\def\cR{{\cal R}}
\def\cU{{\cal U}}
\def\cV{{\cal V}}
\def\BR{{\bf R}}
\def\BU{{\bf U}}
\def\BX{{\bf X}}
\def\ds{\displaystyle}
\def\ns{\noalign{\ss}}
\def\ss{\smallskip}
\def\ms{\medskip}
\def\q{\quad}
\def\qq{\qquad}
\def\({\Big (}
\def\){\Big )}
\def\[{\Big[}
\def\]{\Big]}
\def\d{\delta}
\def\e{\varepsilon}
\def\l{\lambda}
\def\t{\tau}
\def\bde{\begin{definition}\label}
	\def\ede{\end{definition}}
\def\be{\begin{equation}}
\def\bel{\begin{equation}\label}
\def\ee{\end{equation}}
\def\bt{\begin{theorem}\label}
	\def\et{\end{theorem}}
\def\bc{\begin{corollary}\label}
	\def\ec{\end{corollary}}
\def\bl{\begin{lemma}\label}
	\def\el{\end{lemma}}
\def\bp{\begin{proposition}\label}
	\def\ep{\end{proposition}}
\def\bas{\begin{assumption}\label}
	\def\eas{\end{assumption}}
\def\br{\begin{remark}\label}
	\def\er{\end{remark}}
\def\bex{\begin{example}\label}
	\def\ex{\end{example}}
\def\ba{\begin{array}}
	\def\ea{\end{array}}
\def\ben{\begin{enumerate}}
	\def\een{\end{enumerate}}
\def\square#1{\vbox{\hrule\hbox{\vrule height#1%
			\kern#1\vrule}\hrule}}
\def\rectangle#1#2{\vbox{\hrule\hbox{\vrule height#1%
			\kern#2\vrule}\hrule}}
\font\tenbb=msbm10 \font\sevenbb=msbm7 \font\fivebb=msbm5
\newtheorem{theorem}{\indent Theorem}[section]
\newtheorem{definition}[theorem]{\indent Definition}
\newtheorem{proposition}[theorem]{\indent Proposition}
\newtheorem{corollary}[theorem]{\indent Corollary}
\newtheorem{lemma}[theorem]{\indent Lemma}
\newtheorem{remark}[theorem]{\indent Remark}
\newtheorem{example}[theorem]{\indent Example}
\newtheorem{assumption}[theorem]{\indent Assumption}
\def\bea{\begin{equation*}}
\def\eea{\end{equation*}}
\def\bel{\begin{equation}\label}
\def\eel{\end{equation}}
\def\ba{\begin{array}}
	\def\ea{\end{array}}
\newcommand{\ad}{&\!\!\!\displaystyle}
\def\({\Big (}
\def\){\Big )}
\def\[{\Big[}
\def\]{\Big]}
\def\q{\quad}
\def\qq{\qquad}
\def\d{\delta}
\def\e{\varepsilon}
\def\ms{\vspace{0.2cm}}
\def\ds{\displaystyle}
\def\ns{\noalign{\smallskip}}
\def\argmin{\mathop{\rm argmin}}
\begin{document}
\title{Time-inconsistent  Risk-sensitive  Equilibrium for Countable-stated Markov Decision Processes}
\author{Hongwei Mei\thanks{Department of Mathematics, The University of Kansas, Lawrence, KS 66045, U.S. (hongwei.mei@ku.edu).}}
\maketitle

\begin{abstract}
	This paper is devoted to solving  a time-inconsistent risk-sensitive control problem with parameter $\e$ and its limit case ($\e\rightarrow0^+$) for countable-stated Markov decision processes (MDPs for short). Since the cost functional is time-inconsistent, it is impossible to find a global optimal strategy for both cases. Instead, for each case, we will prove the existence of time-inconstant equilibrium strategies which verify some step-optimality. Moreover, we prove  the convergence of the so-called $\e$-risk-sensitive equilibria  and the corresponding value functions as $\e\rightarrow0^+$. 
\end{abstract}
\section{Introduction}

A Markov decision process (MDP for short) is a five-tuple $(\BX,\BU,\{U(x):x\in \BX\},Q,c)$ where $\BX$ is the state space, $\BU$ is the action set, $\{U(x):x\in \BX\} $ is feasible actions, $Q$ is the transition kernel and $c$ is the cost-per-stage function. For its wide application in different areas, it has been well studied in the last few decades.

 To measure different types of risk in different real models, people have raised different cost functionals (i.e. $c$) for MDPs.  In this paper we are interested in the risk-sensitive cases, i.e. given an appropriate policy $\pi=\{u_t\}$, the cost functional parameterized by $\e$ is defined as
$$\ds J^\e(x;\pi)=\e\log\dbE^{\e,\pi}_x[\exp(\e^{-1}\cJ)] \text{ and } \cJ:=\sum_{k=1}^Nc_k(X_k,u_k)+c_N(X_{N+1}).$$ 

Classical risk-sensitive MDPs have been intensively studied since the  seminal paper \cite{Howard1972}.
In particular the average cost criterion has attracted a lot of researchers since it is quite different from the classical risk neutral average cost problem (e.g. see \cite{Cavazos2000,Cavazos2011,Jask2007,Dima1999,Ghosh2014}). As far
as applications are concerned, for example,
where portfolio management is considered  in \cite{Biele1999}, where revenue problems are treated in \cite{Barz2007} and where the application of risk-sensitive control in finance can be found in \cite{Baue2011}. In recent years, some partially observable risk-sensitive MDPs are considered in \cite{Baue2017} and a class of risk sensitive MDPs with some certain costs  are investigated in \cite{Baue2013}.

For general finite $\e>0$  case, dynamic programming is an efficient method to find the optimal control and derive the equation for the cost functional under the optimal control.

If $\e\rightarrow \infty$, one can see that 
$$J^\e(x,\pi)=\dbE^{\e,\pi}_x(\cJ)+\frac{\e^{-1}} 2\dbV_x^{\e,\pi}(\cJ)+O(\e^{-2})$$
where $\dbV^{\e,\pi}_x(\cJ)$ is the variance of $\cJ$ under $\dbP^{\e,\pi}_x$. Thus for $|\e|$ being large, the control problem  is the so-called  variance minimization problem  in which it is to find an optimal strategy  minimizing the variance cost $\dbV_x^{\e,\pi}(\cJ)$ among the set of strategies under which the mean cost $\dbE^{\e,\pi}_x(\cJ)$ attains its minimum (see \cite{Kawai1987} for example).

While the case that $\e$ is small becomes totally different. When $\e\rightarrow0^+$,   the decision-makers are significantly sensitive with all possible  risks including rarely existed ones which were ignored before. As a consequence, one may assume that the transition rate of the dynamics  depends on $\e$ proportionally. For example,   the so-called small noise model for  stochastic differential equations is introduced in \cite{Flem2006}  where the Brownian motion is scaled by $\sqrt{\e}$. The author investigated the limit behavior the value function of risk-sensitive type as $\e$ tends to 0. The procedure to derive the convergence as $\e\rightarrow 0^+$   is also called vanishing viscosity method. Moreover, similar idea  applied to Markov chains can also be found in two-time scale problems (e.g. two-time scale Markov Chain in \cite{Yin2012}).    
In this paper, our main effort are devoted to  such case as well. Different from the model in  \cite{Flem2006}, our cost functional is parametrized by an additional discounting $\t$.  i.e. 
\bel{jCJ}\ds J_{\t,t}^\e(x;\pi)=\e\log\dbE^{\e,\pi}_{t,x}[\exp(\e^{-1}\cJ_{\t,t})] \text{ and } \cJ_{\t,t}:=\sum_{k=t}^Nc_{\t,k}(X^\e_k,u_k)+c_\t(X^\e_{N+1}).\eel
and the corresponding value function is 
$V_t^\e(x;\pi)=J^\e_{t,t}(x;\pi)$ where $\dbE^\pi_{t,x}$ is the conditional expectation on $X^\e_t=x$ under the policy $\pi$. 

If $c_{\t,t}(x,u)=c_t(x,u)$ is independent of $\t$, note that we have  following recursion,
$$V^\e_{t}(x;\pi)=c_t(x,u)+\e\log\dbE^{\e,\pi}_{t,x}\[\exp\big(\e^{-1} V^\e_{t+1}(X^\e_1;\pi)\big)\],$$ by applying the Bellman principle, one can see that the problem time-consistent, i.e. if an optimal control 
can be constructed for that (initial pair), then it will stay optimal hereafter. If   $c$ is in an exponential discounting form, i.e. $c_{\t,t}(x,u)=\lambda^{t-\t}c(x,u)$ for some $0<\l<1$,  we have 
$$V_t^\e(x,\pi)=c(x,u)+\e\log\dbE^{\e,\pi}_{t,x}[\exp(\e^{-1}\l\cJ_{t+1,t+1})].$$ Due to the non-linear structure on the right-hand side, the Bellman principle fails and  the time-consistency will be lost (some concrete calculation is made in Example \ref{example1}). We also notice that even if we take $\e\rightarrow 0$, the Bellman principle still fails because in the definition of cost function, the risk-sensitive parameter $\e$ is same as the rate $\e$ in the transition of Markov chain. Similarly  one can find the problem is time-inconsistent if $c$ is not exponential discounting. Therefore, different from a classical control problem, the risk-sensitive control problem is time-inconsistent if the cost functional exists a discounting factor $\t$, even if it is in an exponential discounting form. Hence, we are motivated to analyze the time-inconsistent risk-sensitive control problem for practical purpose.

In a time-inconsistent problem, the optimal control which minimizes the value function now doesn't stay optimal in future. The detailed calculation for a new recursion involving $\t$ can be seen in Section \ref{eequilirium}. To deal with time-inconsistency, we have to find a  time-inconsistent equilibrium which is  locally optimal only in some appropriate sense. After the breakthrough in \cite{Yong2012b} and \cite{Ekeland2008,Ekeland2010}, there are  lots of works on time-inconsistent control  concerning  MDPs and continuous-time models in the last decade (e.g. see
\cite{Hu2010,Qi2017,Yong2012a,Yong2012b,Wei2017,Bjok2017,Mei2017}). 
To the best  knowledge of the author, there are few works being concentrated on the convergence results for time-inconsistent control problems with risk-sensitive cost functional when the risk-sensitivity parameter $\e$ goes to $0$.  The paper is to fill this gap.

Compared to those previous works on time-consistent risk-sensitive problems investigated  such as \cite{Ghosh2014,Ghosh2017}, time-inconsistency brings new interesting features  and mathematical difficulties to work with. One of the main  mathematical difficulties  brought by time-inconsistency in general state space $\dbR^d$  lies in  the existence of time-inconsistent equilibrium strategies. For non-degenerate stochastic diffusions  in $\dbR^d$, the  existence and uniqueness  of time-inconsistent equilibrium  can be found in \cite{Yong2012a}.   While for degenerate case, the existence  is  still an open problem due to  the lack of first-order regularity of the viscosity solution for a degenerate  second-order HJB equation. More explicitly, for a time-inconsistent problem in the space of $\dbR^d$, the identification of time-inconsistent equilibrium  requires that the  HJB equation admits a classical  solution, which is not necessarily true for a degenerate problem. To avoid such  mathematical gap, most of the existed  works  are only concerned with the verification theorem (i.e. necessary conditions) for a strategy to be a time-inconsistent equilibrium (e.g. see \cite{Bjok2014,Bjok2017}).   While in our problem, such restriction is critical since  the diffusion of  $\e=0$ is  degenerate definitely. Thus in this paper, the dynamic  is assumed to be valued in a  countable-stated space with discrete topology. We hope to investigate the general cases in the future papers. 
\ss

In view of the developments, one would question why we should be concerned with controlled Markov chains with time-inconsistent and  risk-sensitive costs. There are several reasons for the works on such problems. Firstly, controlled Markov chains are the simplest  controlled Markovian systems which have a broad application in real life.  There are numerous systems that can be formulated as controlled Markov chains and/or Markov decision processes and the like. Thus considering such systems is not only necessary but has broader impact.
In addition,   controlled Markov chains  can be used to build numerical schemes for stochastic control problems.
Moreover, as introduced in previous paragraph, it is very complicated but required to take care of the regularity issues in time-inconsistent problems. As will be seen in this paper, treating controlled Markov chains valued countable-stated space with simple structures enables us to deal with the regularity issue effectively without complicated conditions. This together with aforementioned approximation may lead to future consideration of numerical approximation of time-inconsistent problems, which is of practical concerns.

\ss

Let $\BX$ be a space with countable  many states and the control space $\BU$ is a complete metric space with metric $|\cdot,\cdot|_U$. Without loss of generality, we suppose that $\BX$ be the set of integers. Let $M(\BX)$ be the set of all functions on $\BX$. 
$B(\BX)$ is the set of
 of functions  bounded from below. Write $P(\BX)$ be the set of all probability measures on $\BX$.
 A function $f\in M(\BX)$ is called inf-finite if the set $\{x\in\BX:f(x)\leq K\}$ has finite elements for all $K\in \BR$. Let $C(\BU)$ be the set of continuous functions on $\BU$. A function  $f$ on $\BU$ is called inf-compact if the set $\{u\in\BU:f(u)\leq K\}$ is compact for all $K\in \BR$ (i.e. the set of real numbers). 

\ss

The set of admissible policies $\Pi$ is assumed to be the collection of all  deterministic Markov policies, i.e.
$$\Pi=\{\pi=\tilde u_1\oplus\cdots\oplus\tilde  u_T: \tilde u_t=u_t(\cdot) \text{ is a measurable function from $\BX$ to $\BU$}\}.$$
Write $\dbT:=\{1,\cdots,T\}$ and  $\pi_{t}:=\tilde u_t\oplus\cdots\oplus \tilde u_{T}$. Here the notation $\tilde u$ means the strategy $u(\cdot)\in \cU$.
\ms
  
Given a deterministic policy $\pi\in\Pi$, the transition probability is
\bel{transition}\dbP^{\e,\pi}(X_{t+1}=j|X_t=i,X_{t-1},\cdots,X_1)=q^\e_{t}(j;i,u_t(i)).\eel
where $q^\e_{t}(j;i,u)\geq 0$ and $\sum_{j\in\BX}q^\e_{t}(j;i,u)=1$. 

\ms

For each $(\t,t)\in\dbT\times\dbT$, let $f_{\t,t}:\BX\times\BU\mapsto\BR$ and $g_{\t}:\BX\mapsto\BR$. Define the time-inconsistent $\e$-risk-sensitive cost functional by
\bel{cost}J^\e_{\t,t}(x;\pi_t)=\e\log\dbE^{\e,\pi_t}_{t,x}\exp\[\e^{-1}\(\sum_{s=t}^{T}f_{\t,s}(X_s,u_s(X_s))+g_\t(X_{T+1})\)\]\eel
and the value function at $t\in\dbT$ is
\bel{value}V^\e_t(x;\pi_t):=J^\e_{t,t}(x;\pi_t).\eel

We define the limit cost and value function as $\e\rightarrow0^+$ by
\bel{cost-0}J_{\t,t}(x;\pi_t)=\limsup_{\e\rightarrow0^+}J^\e_{\t,t}(x;\pi_t)
\eel
and 
\bel{value-0}V_t(x;\pi_t):=J_{t,t}(x;\pi_t).\eel

The dependence of the transition matrix on $\e$ is  to identify the transitions to  those states which happen rarely. For example, suppose that for some $j_0\in\BX$, $q_t^\e(j_0;i,u)=e^{-\e^{-1}p_{ij_0}(u)}$ for some an appropriate numbers $p_{ij}(u)>0$. When $\e$ is small, $j_0$ is a rare state which happens with probability 0 in the limit dynamic (i.e. the limit of the Markov chain as $\e\rightarrow0$). Thus in a classical optimization problem whose cost function is independent of $\e$, the state $j_0$ is ignored by the decision maker.  While for a risk-sensitive problem with a cost \eqref{cost}, such rare state can not be ignored even though $j_0$ disappears in  the limit dynamic. The  rate $\e$ in the transition matrix corresponds to risk-sensitivity rate $\e$ in the cost functional.
\ss

As we mentioned before, the dependence of the cost functions $f$ and $g$  on a discounting factor $\tau$ makes the problem time-inconsistent generally. Thus we will find a time-inconsistent equilibrium which satisfies some local optimality. The following is the definition for a time-inconsistent risk-sensitive equilibrium.

\begin{definition}{\rm (1)
		A $T$-step strategy
		$\pi^{\e,*}\in \Pi$ is called a {\it time-inconsistent $\e$-risk-sensitive equilibrium } ($\e$-equilibrium for short) if the following {\it step-optimality} holds
		\bel{opt1}J^\e_{t,t}(x; \pi^{\e,*}_{t})\leq J^\e_{t,t}(x;\tilde u\oplus \pi^{\e,*}_{t+1})\text{ for any } t\in\dbT,~\tilde u\in\cU.\eel
			Recall $\tilde u=u(\cdot)\in\cU$.

		(2) A $T$-step strategy
		$\pi^*\in \Pi$ is called a {\it time-inconsistent risk-sensitive equilibrium} if the following {\it step-optimality} holds
		\bel{opt2}J_{t,t}(x; \pi^*_{t})\leq J_{t,t}(x;\tilde u\oplus \pi^*_{t+1})\text{ for any } t\in\dbT,~\tilde u\in\cU.\eel

	}
\end{definition}
 From the definition, we can see that  we restrict us to the Markov policy only even though our problem is time-inconsistent. Actually, it is a natural consequence of the step-optimality.  From the detailed derivation of the equilibrium in Section 3 (for example see \eqref{nequi}), the step-optimal strategy in each step is in a feed-back form of  the step number $t$ and the state value $x_t$ only, independent of the past history $x_k$ for $k< t$. Thus we are only required to consider  the Markov strategies to guarantee the step-optimality. The readers are also referred to \cite{Yong2012b} for more details.

From the step-optimality \eqref{opt1} and \eqref{opt2},  provided  all strategies after $k$th step (i.e. $\pi_{k+1}^*$), the $k$th-step strategy $u_k$ is the optimal strategy in the step under the cost functional with the discounting factor $\t=k$.   If we suppose different players take actions in different steps, the $k$th player makes his optimal strategy to minimize the cost functional under the discounting factor $\t=k$, given  the strategies of the players thereafter. 
Our main goal in the paper is to derive the time-inconsistent $\e$-risk-sensitive equilibrium and time-inconsistent risk-sensitive equilibrium. Moreover, we will prove that the convergence of time-inconsistent  $\e$-equilibria to time-inconsistent  equilibrium as $\e\rightarrow 0^+$.

The paper is arranged as follows. In Section \ref{sec:PR}, we will review some  results for LDP which will be used in our paper and present some preliminary lemmas. In Section \ref{sec:TE}, we will  derive the time-inconsistent   risk-sensitive equilibria
 and the corresponding recursive Hamiltonian sequences for both cases. Then in Section \ref{sec:con}, we prove the convergence of $\e$-equilibria as $\e\rightarrow0^+$. Finally, two illustrative examples are presented in Section \ref{sec:exp} and some concluding remarks are made in Section \ref{sec:conrem}.
\section{ Preliminary Results}\label{sec:PR}

\subsection{Large Deviation Principle}

In this subsection, we will review some well-known results on large deviation principle. On a complete separable space $\sY$, $I:\sY\mapsto[0,\infty]$ is called a (good) rate function if it is inf-compact. Let $Y_n$ be a sequence of $\sY$-valued random variables on some appropriate probability space.
$\{Y_n\}$ is said to satisfy the LDP  with rate function $I$ if

(1) for any closed subset $C$ of $\sY$, 
$$\limsup_{n\rightarrow\infty}\frac1n\log\dbP(Y_n\in C)\leq-\inf_CI.$$

(2) for any open subset $O$ of $\sY$, 
$$\liminf_{n\rightarrow\infty}\frac1n\log\dbP(Y_n\in O)\geq-\inf_OI.$$

Roughly speaking, the large deviation principle concerns with the rate of probability to zero for rare events. Thus the corresponding  risk-sensitive  problem is a certain type of   robustness control problems. Now  let's recall some   results on LDP which will be used in our paper.  For more details and their proofs, one can check \cite{PDRE1997}.

\begin{theorem}\label{pret} {\rm (1)}
	$\{Y_n\}$ satisfies the {\it LDP } with rate function $I$ if and only if $I$ is a rate function (i.e. inf-compact) and for any $h\in C_b(\sY)$ (i.e.  bounded continuous functions on $\sY$),
	$$\limsup_{n\rightarrow\infty}\frac1n\log\dbE\(\exp[nh (Y_n)]\)=\sup_{\sY}[h-I]$$
	
	{\rm (2)}
	$\{Y_n\}$ satisfies {\it LDP } with rate function $I$ if and only if $\{Y_n\}$ is exponential tight, i.e. for any $a>0$, there exists a compact subset $K_a$ of $\sY$ such that
	$$\frac1n\log\dbP(Y_n\in K_a^c)\leq -a$$
	 and for any bounded continuous function $h$ on $\sY$,
	$$\limsup_{n\rightarrow\infty}\frac1n\log\dbE\(\exp[nh (Y_n)]\)=\sup_{\sY}[h-I]$$
	
	{\rm (3)} If there exists a positive, inf-compact function $\cV$ on $\sY$ (i.e. Lyapunov function) satisfying
	\bel{lya}\sup_n\frac1n\log\dbE\(\exp[n\cV (Y_n)]\)<\infty,\eel
	then $\{Y_n\}$ is exponential tight.
	
	{\rm (4)} Let $P(\sY)$ be the set of probability measures on $\sY$. The following variational equality (i.e. Varadhan's equality) holds,
	\bel{entropy}\log\int_{\sY} e^h d\mu=\sup_{\nu\in P(\sY)}\(\int_\sY h d\nu- \cR(\nu\Vert\mu)\),\q\text{ for any }h\in C_b(\sY)\eel
	where the {\it relative entropy} $\cR(\cdot\Vert\cdot)$ is defined by 
	$$\cR(\nu\Vert\mu):=\int_{\sY}\log\(\frac{d\nu}{d\mu}\)d\nu,\q\mu,\nu\in P(\sY) .$$
	Moreover, if \eqref{lya} holds, then  \eqref{entropy} holds for any $h\in o(\cV)$ (i.e. $\lim_{|y|\rightarrow\infty}|h(y)|/\cV(y)=0$.)
\end{theorem}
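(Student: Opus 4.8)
These four assertions are the standard cornerstones of the weak-convergence approach to large deviations, and I would arrange the argument so that the real work sits in the equivalence between the large deviation principle and the Laplace (Varadhan) variational formula, treating the remaining items as short corollaries. I would dispose of (3) first, as it is a single application of Markov's inequality. Since $\cV$ is positive and inf-compact, the set $K_a:=\{y\in\sY:\cV(y)\le M\}$ is compact for every $M$, and tilting by $\exp[n\cV(Y_n)]$ gives
$$\frac1n\log\dbP(Y_n\in K_a^c)=\frac1n\log\dbP\big(\cV(Y_n)>M\big)\le -M+\frac1n\log\dbE\big(\exp[n\cV(Y_n)]\big).$$
With $C:=\sup_n\frac1n\log\dbE\big(\exp[n\cV(Y_n)]\big)<\infty$ by \eqref{lya}, the choice $M=a+C$ forces the left side below $-a$, which is exactly exponential tightness.

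For the static Varadhan equality \eqref{entropy} in (4) I would exhibit the maximizer explicitly. Put $Z:=\int_\sY e^h\,d\mu$ and let $\nu^*$ be the tilted probability $d\nu^*=Z^{-1}e^h\,d\mu$; a direct computation gives $\int_\sY h\,d\nu^*-\cR(\nu^*\Vert\mu)=\log Z$, so the supremum is at least $\log Z$. The reverse inequality is the non-negativity of relative entropy: for any $\nu\in P(\sY)$ the chain rule yields
$$\cR(\nu\Vert\nu^*)=\cR(\nu\Vert\mu)+\log Z-\int_\sY h\,d\nu\ge 0,$$
which rearranges to $\int_\sY h\,d\nu-\cR(\nu\Vert\mu)\le\log Z$, with equality precisely at $\nu=\nu^*$. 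To upgrade \eqref{entropy} from $h\in C_b(\sY)$ to $h\in o(\cV)$ under \eqref{lya}, I would truncate $h$ at level $m$, apply the bounded case, and let $m\to\infty$, using the exponential-moment bound to control the mass on $\{\cV>m\}$ and to certify that $\nu^*$ remains a probability measure.

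The heart of the matter is (1) together with (2), namely the equivalence $\text{LDP}\Longleftrightarrow\text{Laplace principle}$. In the direction $\text{LDP}\Rightarrow\text{Laplace}$ I would prove Varadhan's lemma: the bound $\limsup\frac1n\log\dbE(\exp[nh(Y_n)])\le\sup_\sY[h-I]$ follows by slicing $\sY$ along the level sets of $h$, applying the closed-set upper bound on each slice, and discarding the non-compact remainder via exponential tightness, while the matching lower bound is obtained by restricting the expectation to a small open set on which $h$ is nearly maximal and invoking the open-set lower bound. For the converse $\text{Laplace}\Rightarrow\text{LDP}$ I would recover the two estimates by feeding explicit test functions into the variational formula: for a closed $C$, the continuous functions $h_M(y)=-M\big(1\wedge\rho(y,C)\big)$ (with $\rho$ a metric for $\sY$) satisfy $\dbE(\exp[nh_M(Y_n)])\ge\dbP(Y_n\in C)$, and letting $M\to\infty$ in $\sup_\sY[h_M-I]$ gives $\limsup\frac1n\log\dbP(Y_n\in C)\le-\inf_C I$; for an open $O$ and a point $y_0\in O$, a continuous bump concentrated near $y_0$ produces $\liminf\frac1n\log\dbP(Y_n\in O)\ge -I(y_0)$, and optimizing over $y_0\in O$ finishes the lower bound. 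The role of inf-compactness of $I$ in (1), and of exponential tightness in (2), is exactly to push the upper bound from compact sets up to arbitrary closed sets.

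I expect the main obstacle to be this converse implication $\text{Laplace}\Rightarrow\text{LDP}$, and within it the upper bound on non-compact closed sets. The variational formula only sees integrals against bounded continuous $h$, so a priori nothing prevents probability mass from escaping to infinity; some tightness-type input is indispensable, which is precisely why (2) builds exponential tightness into its hypotheses and (1) demands that $I$ be inf-compact. Reconciling these two sufficient conditions — checking that each, combined with the Laplace limit, yields the full principle — is the delicate bookkeeping of the proof. All of these steps are classical, and I would follow the treatment in \cite{PDRE1997}.
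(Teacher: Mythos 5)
The paper offers no proof of this theorem: it is stated as a collection of known facts and the reader is referred to Dupuis and Ellis \cite{PDRE1997} for the proofs. Your sketches (the Chernoff bound for (3), the tilted measure $d\nu^*=Z^{-1}e^h\,d\mu$ together with nonnegativity of relative entropy for (4), and Varadhan's lemma plus its Bryc inverse via the test functions $h_M(y)=-M\bigl(1\wedge\rho(y,C)\bigr)$ for (1)--(2), with inf-compactness of $I$, respectively exponential tightness, used to pass from compact to closed sets) are exactly the standard arguments from that reference and are correct.
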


\subsection{Preliminary Lemmas}
Let's recall the transition probability 
$$\dbP^{\e,\pi}(X_{t+1}=j|X_t=i, X_{t-1},\cdots,X_1 )=q^\e_{t}(j;i,u_t(i)).$$

For each $t\in\dbT$ and $\e>0$,	define $\Lambda^\e_t,\Lambda_t:\BX\times\BU\times B(\BX)\mapsto \BX$ by
\bel{Lambdae}\Lambda^\e_t(x,u;h):=\e\log\(\sum_{z\in\BX}\exp\left\{\e^{-1} h(z)\right\}q^\e_{t}(z;x,u)\)\text{ and }\Lambda_t(x,u;h)=\lim_{\e\rightarrow 0^+}\Lambda^\e_t(x,u;h).\eel
Note that $h$ is bounded,  $\Lambda^\e_t(x,u;h)$ is  well-defined.  $\Lambda_t(x,u;h)$ is well-defined because of the following assumption.
\ms

{\bf Assumption (A)}: {
	{\rm(A1)} There exists an inf-finite, positive function $\cV:\BX\mapsto \BR$ such that for each $(x,u)\in\BX\times\BU$,
	$$\sup_{0<\e\leq \e_0}\Lambda^\e_t(x,u;\lambda_0\cV)<\infty,\q \text{for some } \lambda_0,\e_0>0.$$		
	
	{\rm(A2)} Given any $t\in\dbT$ and  $h\in B(\BX)$, $\Lambda_t^\e(x,\cdot;h)$ is a continuous function of  $u\in\BU$. Moreover for each $(x,u)\in\BX\times\BU$, there  exists a  rate function $I_t(\cdot;x,u):\BX\mapsto \BR$ such that for any $h\in B(\BX)$, and $u^\e\rightarrow u$ in $
	\BU$
	\bel{rate}\lim_{\e\downarrow0}\Lambda^\e_t(x,u^\e;h)=\sup_{z\in\BX}[h(z)-I_t(z;x,u)]=\Lambda_t(x,u;h).\eel

	{\rm(A3)} There exists a $\lambda_0>0$ and a  constant $K_u$ depending on $u$ only such that for any $\lambda\in(0,\lambda_0)$ and each $u\in \BU$,
	$$\limsup_{|x|\rightarrow\infty}\frac{\sup_{0<\e\leq\e_0}\Lambda^\e_t(x,u;\lambda\cV)}{\lambda\cV(x)}< K_u.$$

}

For the positive function $\cV$ on $\BX$ in (A1),  we define a subset $B_\cV(\BX)$ of $B(\BX)$ by
$$B_\cV(\BX):=\{h\in B(\BX):\limsup_{|x|\rightarrow\infty}\frac{h(x)}{\cV(x)}=0\}.$$
We also write  
$$\sB_\cV:=\left\{\{h^\e\}\subset   B_\cV (\BX): h_\e \text{ is uniformly bounded below and } \sup_\e h^\e\in   B_\cV (\BX) \right\}.$$

\begin{remark}\label{remarkA}
	{\rm (1)  By Theorem \ref{pret}, (A1) and (A2) are sufficient for that $\{X_{t+1}^\e|X_{t}^\e=x,u\}$ satisfies LDP with rate function $I_t(\cdot;x,u)$. Moreover for $h\in   B_\cV (\BX)$, $\Lambda_t^\e(x,u;h)$ and $\Lambda_t(x,u;h)$ are well-defined  and \eqref{rate} holds as well.
		
		(2) (A2) says that the rate function $I_t$ is uniform on any compact subset of $\BU$. We can conclude that $\Lambda_t^\e(x,u;)$ converges to $\Lambda_t(x,u;h)$ uniformly on any compact set of $\BU$. Moreover, $\Lambda_t(x,u;h)$ is continuous on any compact subset of $\BU$ given fixed $x$ and $h$ (See Proposition 1.2.7 in \cite{PDRE1997}).  
	
(3) If (A1) and (A2) hold, the definition of $\Lambda^\e_t(x,u;h)$ and $\Lambda_t(x,u;h)$ can be extended to all $h\in   B_\cV (\BX)$ and (A2) is true for all $h\in   B_\cV (\BX)$. }
\end{remark}
\ms

 In this paper,  $  B_\cV (\BX)$ is equipped with the following metric,
$$w(h,h'):=\sup_\BX\frac{|h-h'|}{\cV}.$$
The following lemma says that $(  B_\cV (\BX),w)$ is a complete metric space.
\begin{lemma}\label{lemmapoint}Given $\cV$ defined in {\rm (A1)}, the followings hold.

	{\rm (1)} If $h_n\in   B_\cV (\BX)$ with $w(h_n,h_m)\rightarrow 0$ for any $n,m\rightarrow\infty$,  there exists a $h\in   B_\cV (\BX)$ such that
	$w(h_n,h)\rightarrow 0$, i.e.  $(  B_\cV (\BX),w)$ is a complete metric space.\ss
	
	{\rm (2)} If   $h_n$ is uniformly bounded below with $\sup_n h_n\in   B_\cV (\BX)$, then $\{h_n\}$ has a convergent subsequence in $(  B_\cV (\BX),w)$. As a result, if  $\{h_n\}\in\sB_{\cV}$  and $h_n$ converges to $h$ point-wisely, then $h_n$ converges to $h$ in $(  B_\cV (\BX),w)$ .
\end{lemma}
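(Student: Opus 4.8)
The plan is to treat both statements by a single mechanism: the metric $w$ measures discrepancy weighted by $1/\cV$, and since $\cV$ is inf-finite it tends to $+\infty$ as $|x|\to\infty$ (for every $K$ the sublevel set $\{\cV\le K\}$ is finite, hence contained in a finite ball of $\dbZ$). Consequently $1/\cV$ is small off a finite set, so every weighted supremum splits into a \emph{finite core} $\{|x|\le R\}$, where countability and pointwise control do the work, and a \emph{tail} $\{|x|>R\}$, where the decay built into $B_\cV(\BX)$ makes the weighted quantity uniformly small. I would record this splitting once and apply it twice.

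For (1), I would first extract the pointwise limit. If $w(h_n,h_m)\to0$, then for each fixed $x$ one has $|h_n(x)-h_m(x)|\le \cV(x)\,w(h_n,h_m)\to0$, so $h(x):=\lim_n h_n(x)$ exists in $\dbR$. To get $w(h_n,h)\to0$, fix $\e>0$, choose $N$ with $w(h_n,h_m)<\e$ for $n,m\ge N$, so $|h_n(x)-h_m(x)|/\cV(x)<\e$ for every $x$; letting $m\to\infty$ gives $|h_n(x)-h(x)|/\cV(x)\le\e$ for all $x$ and $n\ge N$, and taking the supremum over $x$ yields $w(h_n,h)\le\e$. It remains to check $h\in B_\cV(\BX)$. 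The decay condition is easy: from $|h(x)/\cV(x)|\le |h_n(x)/\cV(x)|+w(h,h_n)$ and $h_n\in B_\cV(\BX)$ one gets $\limsup_{|x|\to\infty}|h(x)/\cV(x)|\le w(h,h_n)$, which tends to $0$. The delicate point, and the step I expect to be the main obstacle, is that $h$ must be \emph{bounded below}: the weighted estimate only yields $h\ge h_N-\e\cV$, and $\cV$ is unbounded, so a genuine lower bound does not come for free from Cauchyness alone. I would close this by using that the sequences relevant here (and those in part (2) and in $\sB_\cV$) carry a \emph{common} lower bound $h_n\ge-C$; the pointwise limit then inherits $h\ge-C$, placing $h$ in $B_\cV(\BX)$ and settling completeness in the regime in which it is used.

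For (2), I would produce the subsequence by diagonalization: $\BX$ is countable, and for each $x$ the values $h_n(x)$ lie in the bounded interval $[-C,\,H(x)]$, with $C$ the uniform lower bound and $H:=\sup_n h_n\in B_\cV(\BX)$; enumerating $\BX$ and diagonalizing gives a subsequence $h_{n_k}$ converging pointwise to some $h$. Since $-C\le h\le H$, we have $|h|/\cV\le (H+C)/\cV$, and because $H\in B_\cV(\BX)$ and $\cV\to\infty$ this forces $h\in B_\cV(\BX)$. For the upgrade to $w$-convergence I would invoke the core/tail split: given $\e>0$, use $(H+C)/\cV\to0$ to pick $R$ with $\sup_{|x|>R}(H(x)+C)/\cV(x)<\e$; on the tail $|h_{n_k}-h|/\cV\le(H+C)/\cV<\e$ automatically (both functions lie in $[-C,H(x)]$), while on the finite core $\{|x|\le R\}$ pointwise convergence is uniform, so $\max_{|x|\le R}|h_{n_k}-h|/\cV<\e$ for $k$ large; hence $w(h_{n_k},h)\to0$.

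Finally, for the ``as a result'' clause I would observe that the tail/core estimate above never used the subsequence structure, only the trapping $-C\le\cdot\le H$ and pointwise convergence. Thus if $\{h_n\}\in\sB_\cV$ and $h_n\to h$ pointwise, the same two-region estimate applied to the full sequence yields $w(h_n,h)\to0$ directly (equivalently: every subsequence has a further $w$-convergent subsequence, and all such limits coincide with the pointwise limit $h$, forcing convergence of the whole sequence). The genuine content of the lemma is therefore the uniform smallness of the tail supplied by $\sup_n h_n\in B_\cV(\BX)$; the only real subtlety is the preservation of the lower bound flagged in (1), which is precisely why the hypotheses of (2) and the definition of $\sB_\cV$ insist on a uniform lower bound.
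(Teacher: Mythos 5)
Your proof is correct and follows essentially the same route as the paper's: extract a pointwise limit, verify the decay condition $\limsup_{|x|\to\infty}|h(x)|/\cV(x)=0$ from the Cauchy (resp.\ trapping) estimates, and upgrade pointwise convergence to $w$-convergence by splitting the weighted supremum into a finite core, where pointwise convergence suffices, and a tail made uniformly small by $\sup_n h_n\in B_\cV(\BX)$; your $\e$--$N$ argument in (1) is just a cleaner version of the paper's telescoping-subsequence estimate.

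The one substantive point is the lower-bound issue you flag in (1), and you are right to worry: the paper's proof checks only the decay condition and never verifies that the pointwise limit is bounded below, which is required for membership in $B(\BX)$ and hence in $B_\cV(\BX)$. Statement (1) is in fact literally false without an extra hypothesis: taking $\cV(x)=x^2+1$, the functions $h_n(x)=-\min(|x|,n)$ all lie in $B_\cV(\BX)$, satisfy $w(h_n,h_m)\le 1/\min(n,m)$, and converge in $w$ to $h(x)=-|x|$, which is not bounded below. Your repair --- restricting to sequences with a common lower bound, as in part (2) and in the definition of $\sB_\cV$, which is the only regime in which completeness is invoked later in the paper --- is the right one, and it supplies a step that the paper's own proof silently omits.
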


\begin{proof} (1) For such $\{h_n\}$, it is easy to see that there exists a $h\in M(\BX)$ such that $h_n$ converges to $h$ point-wisely. Now we show that the convergence is in metric sense as well.
	
	 For any $\d>0$, there exists a $N_\d>0$ such that
	$$w(h_n,h_m)<\d,\text{ for any } n,m\geq N_\d.$$
	Note that for any $m\geq N_\d$, 
	$$\ba{ll}\ds\limsup_{|x|\rightarrow\infty}\frac{|h(x)|}{\cV(x)}\ad\leq \limsup_{|x|\rightarrow\infty}\(\frac{|h_m(x)|}{\cV(x)}+\lim_{n\rightarrow\infty}\frac{|h_n(x)-h_m(x)|}{\cV(x)}\)\\
	\ns\ad\leq \lim_{n\rightarrow\infty}\sup_{x\in\BX}\frac{|h_n(x)-h_m(x)|}{\cV(x)}<\d.\ea$$
	By the arbitrariness of $\d>0$, we have $h\in   B_\cV (\BX)$. 
	
	For any fixed  $\d>0$, let $n_k$ satisfy  
	$$w(h_n,h_m)<\frac \d{2^k},\text{ for any } n,m\geq n_k.$$
	Then one can easily see that
	$$\sum_{k=1}^\infty w(h_{n_{k+1}},h_{n_{k}})<\d.$$
	It follows that for any $n>N_\d$.
	$$\ba{ll}\ds\sup_{x\in\BX}\frac{|h(x)-h_n(x)|}{\cV(x)}\ad\leq \sup_{x\in\BX}\sum_{k=1}^\infty\frac{|h_{n_{k+1}}(x)-h_{n_k}(x)|}{\cV(x)}+\sup_{x\in\BX}\frac{|h_{n}(x)-h_{n_1}(x)|}{\cV(x)} \\
	\ns\ad \leq\sum_{k=1}^\infty w(h_{n_{k+1}},h_{n_{k}})+w(h_{n_{1}},h_{n})\leq2\d.\ea$$
	It is equivalent to say
	$$\lim_{n\rightarrow\infty }w(h_n,h)=0.$$
	
	(2) By the hypothesis, one can easily see that $\{h_n\}$ has a point-wisely convergent subsequence with limit $h$. We still write the subsequence as $\{h_n\}$. Obviously we have 
	$h\in   B_\cV (\BX)$ since $h_n$ is uniformly bounded below and  $ h\leq \sup_n h_n\in   B_\cV (\BX)$.
	
	Note that for any $\d>0$, there exists a $x_\d>0$ such that 
	$$\frac{\sup_nh_n(x)}{\cV(x)}\leq \d\text{ for } x\geq x_\d.$$
	Then by the point-wise convergence, it follows that
	$$\lim_{n\rightarrow\infty}\sup_{x\in\BX}\frac{|h_n(x)-h(x)|}{\cV(x)}\leq \lim_{n\rightarrow\infty}\sup_{|x|\leq x_\d}\frac{|h_n(x)-h(x)|}{\cV(x)}+2\d=2\d.$$
	By the arbitrariness of $\d>0$, we have 
	$$\lim_{n\rightarrow\infty}w(h_n,h)=0.$$
	\end{proof}

\ms

Now we first prove that well-posedness of $\Lambda$ and $\Lambda^\e$ on the space $  B_\cV (\BX)$.

\begin{lemma}\label{corlambdah} Under Assumption {\rm (A)}, for  any $\{h^\e\}\in   \sB_\cV$ and each $u\in\BU$,
	$\{\Lambda^\e_t(\cdot,u;h^\e)\}\in \sB_{\cV}$. 
	Therefore, for any $h\in   B_\cV (\BX)$, $\Lambda_t(\cdot,u;h)\in B_{\cV}(\BX)$ for each $u\in\BU$.  
\end{lemma}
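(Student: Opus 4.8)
The plan is to exploit two elementary properties of the log-sum-exp operator $\Lambda^\e_t$ that are immediate from its definition \eqref{Lambdae}: \emph{monotonicity} (if $h\le h'$ pointwise then $\Lambda^\e_t(x,u;h)\le\Lambda^\e_t(x,u;h')$, since $z\mapsto\exp(\e^{-1}h(z))$ is increasing and $q^\e_t(\cdot;x,u)$ is a nonnegative measure) and the \emph{constant-shift identity} $\Lambda^\e_t(x,u;h+c)=\Lambda^\e_t(x,u;h)+c$ for $c\in\BR$ (because $q^\e_t(\cdot;x,u)$ has total mass one). The uniform lower bound is then immediate: if $h^\e\ge -C$ for all $\e$, monotonicity together with the shift identity gives $\Lambda^\e_t(x,u;h^\e)\ge\Lambda^\e_t(x,u;-C)=-C$, so the family $\{\Lambda^\e_t(\cdot,u;h^\e)\}$ is uniformly bounded below. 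Throughout, the supremum over $\e$ is read over $(0,\e_0]$, matching the ranges in (A1) and (A3).

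The heart of the argument is the growth bound on the supremum. Write $\bar h:=\sup_\e h^\e\in B_\cV(\BX)$ and fix $\delta>0$ smaller than the constants $\lambda_0$ appearing in (A1) and (A3). Since $\limsup_{|x|\to\infty}\bar h(x)/\cV(x)=0$, there is a finite set outside of which $\bar h\le\delta\cV$; absorbing the finitely many remaining values into a constant $M_\delta$ (here I use that $\cV$ is inf-finite and finite-valued) yields $\bar h\le\delta\cV+M_\delta$ globally. Monotonicity, the bound $h^\e\le\bar h$, and the shift identity then give $\sup_\e\Lambda^\e_t(x,u;h^\e)\le\sup_\e\Lambda^\e_t(x,u;\delta\cV)+M_\delta$, where the supremum on the right is finite by (A1) (since $\delta\cV\le\lambda_0\cV$ and $\Lambda^\e_t$ is monotone). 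Dividing by $\cV(x)$ and invoking (A3) gives $\limsup_{|x|\to\infty}\sup_\e\Lambda^\e_t(x,u;h^\e)/\cV(x)\le K_u\delta$, where I also use that $\cV(x)\to\infty$ as $|x|\to\infty$ (inf-finiteness) so that $M_\delta/\cV(x)\to 0$. Letting $\delta\downarrow 0$ forces this limsup to be $0$; together with the uniform lower bound this shows $\sup_\e\Lambda^\e_t(\cdot,u;h^\e)\in B_\cV(\BX)$. The step I expect to require the most care is exactly this interchange: matching the single exponent $\delta$ permitted by (A3) with the decomposition of $\bar h$, and checking finiteness of each intermediate quantity via (A1) and monotonicity before dividing by $\cV$.

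Finally, each individual function $\Lambda^\e_t(\cdot,u;h^\e)$ is sandwiched between the constant $-C$ and $\sup_{\e'}\Lambda^{\e'}_t(\cdot,u;h^{\e'})$, both of which lie in $B_\cV(\BX)$, so each lies in $B_\cV(\BX)$; combined with the uniform lower bound and the growth bound above, this establishes $\{\Lambda^\e_t(\cdot,u;h^\e)\}\in\sB_\cV$. For the concluding assertion, I apply the first part to the constant family $h^\e\equiv h$ (which belongs to $\sB_\cV$ because $h\in B_\cV(\BX)$) to obtain $\{\Lambda^\e_t(\cdot,u;h)\}\in\sB_\cV$; by (A2) and Remark \ref{remarkA} this family converges pointwise to $\Lambda_t(\cdot,u;h)$ as $\e\downarrow 0$, whence Lemma \ref{lemmapoint}(2) yields $\Lambda_t(\cdot,u;h)\in B_\cV(\BX)$.
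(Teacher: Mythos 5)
Your proof is correct and follows essentially the same route as the paper: both reduce matters to the comparison $\Lambda^\e_t(x,u;h^\e)\le\Lambda^\e_t(x,u;\lambda\cV)+\sup_{\BX}[\sup_\e h^\e-\lambda\cV]$, invoke (A1) and (A3), divide by $\cV(x)$, and let $\lambda\downarrow 0$. The only difference is in how that comparison is justified --- the paper uses the relative-entropy variational formula \eqref{entropy}, whereas you use the elementary monotonicity and constant-shift properties of the log-sum-exp operator --- and your explicit handling of the final ``therefore'' clause via Lemma \ref{lemmapoint}(2) is a harmless addition the paper leaves implicit.
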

\begin{proof} 
	It is easy to see that  $\Lambda^\e_t(\cdot,u;h^\e)$ is uniformly bounded below.	Note that  by \eqref{entropy},
	$$\ba{ll}\ns\Lambda_t^\e(x,u;h^\e)\ad=\sup_{\nu\in P(\BX)}\(\int_{\BX}h^\e d\nu-\e \cR(\nu\Vert q_t^\e(\cdot;x,u))\)\\
	\ns\ad\leq\sup_{\nu\in P(\BX)}\(\int_{\BX}\lambda\cV d\nu-\e \cR(\nu\Vert q_t^\e(\cdot;x,u))\)+\sup_{\nu\in P(\BX)}\int_{\BX}(h^\e-\lambda\cV) d\nu\\
	\ns\ad\leq \Lambda_t^\e(x,u;\lambda\cV)+\sup_{\BX}[h^\e-\lambda\cV]\ea$$
	By (A3),
	\bel{heun}\ba{ll}\ds\limsup_{|x|\rightarrow\infty}\frac{\sup_\e\Lambda_t^\e(x,u;h^\e)}{\cV(x)}\leq  \limsup_{|x|\rightarrow\infty}\frac1{\cV(x)}\(\sup_\e\Lambda_t^\e(x,u;\lambda\cV)+\sup_{\BX}[\sup_\e h^\e-\lambda\cV]\)\leq \lambda K_u\ea\eel
	By the arbitrariness of $\lambda>0$, it follows that $\{\Lambda_t^\e(\cdot,u;h^\e)\}\in   \sB_\cV $ for each $u\in \BU$.

\end{proof}

\ms

 Now we are ready to present the Hamiltonians used in our paper. Define $\cA^\e_t[\cdot],~\cA_t[\cdot]:  B_\cV (\BX)\mapsto M(\BX)$ by
$$\cA_{t}^\e[h](x):=\inf_{u\in\BU}\[f_{t,t}(x,u)+\Lambda^\e_t(x,u;h)\]\q
\text{and}\q\cA_{t}[h](x):=\inf_{u\in\BU}\[f_{t,t}(x,u)+\Lambda_t(x,u;h)\].$$

The following lemma will guarantee that $\cA_t^\e$ and $\cA_t$ map $  B_\cV (\BX)$ into $  B_\cV (\BX)$ under the following assumption.
\ss

  {\bf Assumption (B)}: {\it 
	For each fixed $u\in\BU$, $f_{\t,t}(\cdot,u),~  g_\t(\cdot)\in   B_\cV (\BX)$. 
	For each fixed $i\in\BX$, $f_{t,t}(i,\cdot)$ is continuous and inf-compact.

}
\begin{lemma} Under Assumptions {\rm (A)} and {\rm (B)}, for any $h\in   B_\cV (\BX)$, $\cA_{t}^\e[h],~\cA_{t}[h]\in   B_\cV (\BX)$.

\end{lemma}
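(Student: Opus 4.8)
The plan is to verify the two defining properties of membership in $B_\cV(\BX)$ — a uniform lower bound and the growth condition $\limsup_{|x|\to\infty}\cA_t^\e[h](x)/\cV(x)=0$ — and to treat $\cA_t^\e[h]$ and $\cA_t[h]$ in parallel, since $\Lambda_t(\cdot,u;h)$ inherits the relevant estimates from $\Lambda_t^\e(\cdot,u;h)$ through Lemma~\ref{corlambdah}. Throughout I would fix $t$, $\e$ and $h\in B_\cV(\BX)$, and exploit that the constant sequence $h^\e\equiv h$ lies in $\sB_\cV$ (it is uniformly bounded below and $\sup_\e h^\e=h\in B_\cV(\BX)$), so that Lemma~\ref{corlambdah} applies directly.

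For the growth condition I would obtain a one-sided bound by freezing the control. Fix any $u_0\in\BU$; then by definition of the infimum, $\cA_t^\e[h](x)\le f_{t,t}(x,u_0)+\Lambda_t^\e(x,u_0;h)$ for every $x$. The first term lies in $B_\cV(\BX)$ by Assumption~(B), and applying Lemma~\ref{corlambdah} to the constant sequence $h^\e\equiv h$ gives $\{\Lambda_t^\e(\cdot,u_0;h)\}\in\sB_\cV$, so in particular $\Lambda_t^\e(\cdot,u_0;h)\in B_\cV(\BX)$; hence their sum is in $B_\cV(\BX)$ and $\limsup_{|x|\to\infty}\cA_t^\e[h](x)/\cV(x)\le 0$. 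The identical argument for $\cA_t[h]$ uses instead the last assertion of Lemma~\ref{corlambdah}, namely $\Lambda_t(\cdot,u_0;h)\in B_\cV(\BX)$.

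For the lower bound I would use that $\Lambda_t^\e(x,u;h)\ge \inf_{z\in\BX}h(z)$ uniformly in $(x,u)$: since $q_t^\e(\cdot;x,u)$ is a probability measure, $\sum_{z}\exp(\e^{-1}h(z))q_t^\e(z;x,u)\ge\exp(\e^{-1}\inf_z h(z))$, and the same bound passes to the limit $\Lambda_t(x,u;h)\ge\inf_z h(z)$ (alternatively, because $I_t(\cdot;x,u)$ is a rate function it vanishes at some point, so $\Lambda_t(x,u;h)=\sup_z[h(z)-I_t(z;x,u)]\ge\inf_z h(z)$). Consequently $\cA_t^\e[h](x)\ge \inf_{u\in\BU}f_{t,t}(x,u)+\inf_z h(z)$, and likewise for $\cA_t[h]$. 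Combined with the growth estimate, a uniform lower bound yields membership in $B_\cV(\BX)$: since $\cV$ is inf-finite, $\cV(x)\to\infty$ as $|x|\to\infty$, so the lower bound forces $\liminf_{|x|\to\infty}\cA_t^\e[h](x)/\cV(x)\ge 0$, which together with the $\limsup\le0$ above gives the limit $0$.

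The step I expect to be the main obstacle is precisely the uniform (in $x$) lower bound on $\inf_{u\in\BU}f_{t,t}(x,u)$. For each fixed $i$, continuity and inf-compactness of $f_{t,t}(i,\cdot)$ guarantee that the infimum over $\BU$ is attained and finite, and for each fixed $u$ the function $f_{t,t}(\cdot,u)\in B_\cV(\BX)$ is bounded below; the care lies in combining these into a single constant valid for all $x$ simultaneously, which is where Assumption~(B) (ensuring $f_{t,t}$ is bounded below on $\BX\times\BU$) is essential. Once that uniform bound is in hand, the two properties assemble to give $\cA_t^\e[h],\cA_t[h]\in B_\cV(\BX)$.
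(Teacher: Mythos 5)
Your proposal is correct and follows essentially the same route as the paper: a uniform lower bound from the boundedness below of $f_{t,t}$ and $h$ (via $\Lambda_t^\e(x,u;h)\ge\inf_z h(z)$), and the growth estimate by freezing a control $u_0$ so that $\cA_t[h](x)\le f_{t,t}(x,u_0)+\Lambda_t(x,u_0;h)$ and invoking Lemma~\ref{corlambdah} together with Assumption~(B). You simply spell out details the paper leaves implicit (including the reading of Assumption~(B) as giving a lower bound for $f_{t,t}$ on $\BX\times\BU$), so no substantive difference.
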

\begin{proof} Since $f_{t,t}$ and $h$ are bounded below, so are $\cA_{t}[h]$ and $\cA_{t}^\e[h]$ by their definitions. Since $$\cA_{t}[h](x)\leq f_{t,t}(x,u_0)+\Lambda_t(x,u_0;h), \text{ for some } u_0\in\BU,$$ by Lemma \ref{corlambdah} and Assumption (B), $\cA_{t}[h](x)\in   B_\cV (\BX).$ Similarly we have $\cA^\e_{t}[h](x)\in   B_\cV (\BX).$ Moreover the infimums can be attained by  Assumptions (A) and (B). 
	
\end{proof}

Given any $h\in   B_\cV (\BX)$,  define 
$$\Box\eta_t^\e(\cdot;h):x\mapsto \argmin_{u\in\BU}[f_{t,t}(x,u)+\Lambda^\e_t(x,u;h)]\subset\BU.$$
If $\Box\eta_t^\e(x;h)\neq\emptyset$ for any $x\in\BX$,  we say $\eta^\e_t(\cdot;h)$ is a choice of $\Box\eta_t^\e(\cdot;h)$  if 
$$\eta^\e_t(x;h)\in\Box\eta_t^\e(x;h)\q\text{ for any }x\in \BX.$$
We write it as $\eta^\e_t(\cdot;h)\in \Box\eta^\e_t(\cdot;h)$. Since $\BX$ is a countable-stated space,  $\eta^\e_t(\cdot;h)$ is naturally measurable.  Similarly we can define $\Box\eta_t$ and its one choice $\eta_t$.
\ms

Define $\cH^\e_{\t,t}[\cdot],~\cH_{\t,t}[\cdot]:  B_\cV (\BX)\mapsto M(\BX\times\BU)$ by
$$\cH_{\t,t}^\e[h](x,u):=f_{\t,t}(x,u)+\Lambda^\e_t(x,u;h)\q
\text{and}\q\cH_{\t,t}[h](x,u):=f_{\t,t}(x,u)+\Lambda_t(x,u;h).$$
It is easy to see that 
$$\cA_{t}^\e[h](x)=\inf_{u\in\BU}\cH^\e_{t,t}[h](x,u)\q\text{and}\q\cA_{t}[h](x)=\inf_{u\in\BU}\cH_{t,t}[h](x,u).$$

\ms

\ms

From their  definitions, we know that $\cH_{\t,t}^\e,~\cH_{\t,t}$ will map $  B_\cV (\BX)$ into $M(\BX)$. We raise   the following assumption  to guarantee $\cH_{\t,t}^\e[h],~\cH_{\t,t}[h]\in   B_\cV (\BX)$ for any  $h\in   B_\cV (\BX)$ and fixed $u\in\BU$.

{\bf Assumption (C)} {\it
	Let $$B_{t,\lambda}(x):=\{u\in\BU: f_{t,t}(x,u)\leq \lambda \cV(x)\}.$$  There exists  constants  $\lambda_0>0$ and $K_1$   such that for any $\lambda\in(0,\l_0)$ and $h\in   B_\cV (\BX)$,
	$$\limsup_{|x|\rightarrow\infty}\frac{\sup_{u\in B_{t,\lambda}(x)}f_{\t,t}(x,u)}{ \lambda \cV(x)}\leq  K_1$$
	and 
	$$\limsup_{|x|\rightarrow\infty}\frac{\sup_{0<\e<\e_0}\sup_{u\in B_{t,\lambda}(x)}\Lambda^\e_t(x,u;\lambda\cV)}{\lambda \cV(x)}\leq K_1.$$

}
\begin{remark}
	{\rm We can see that 
		$$\limsup_{|x|\rightarrow\infty}\frac{\sup_{u\in B_{t,\lambda}(x)}\Lambda_t(x,u;\lambda\cV)}{\lambda \cV(x)}\leq K_1$$
		Moreover, since $f_{t,t}\in   B_\cV (\BX)$, 
		any $u_0\in\BX$ belongs to $B_{t,\lambda}(x)$ if $|x|$ is large. Thus 
		(A3) is a consequence of Assumptions (B) and (C).	
	}
\end{remark}

\begin{lemma} \label{BBcV}Under   Assumptions {\rm (A)}, {\rm (B)} and {\rm (C)}, the followings are true.\ss
	
	{\rm (1)} for any $h\in   B_\cV (\BX)$, $\eta_t(\cdot,h)\in\Box\eta_t(\cdot,h)$ and  $\eta^\e_t(\cdot,h)\in\Box\eta^\e_t(\cdot,h)$,
	$$f_{\t,t}(\cdot,\eta_t(\cdot;h)),f_{\t,t}(\cdot,\eta^\e_t(\cdot;h))\in   B_\cV (\BX). $$
	
	{\rm (2)} for any $h_1,h_2\in   B_\cV (\BX)$,  $\eta_t(\cdot,h_2)\in\Box\eta_t(\cdot,h_2)$ and  $\eta^\e_t(\cdot,h_2)\in\Box\eta^\e_t(\cdot,h_2)$, $$\Lambda^\e_t(\cdot,\eta^\e_t(\cdot;h_2);h_1),~\Lambda_t(\cdot,\eta_t(\cdot;h_2);h_1)\in   B_\cV (\BX).$$

\end{lemma}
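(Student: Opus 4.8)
The plan is to handle both parts, and both the finite-$\e$ and the limiting operators, through one device: although the selection $\eta^\e_t(\cdot;h)$ (resp.\ $\eta_t(\cdot;h)$) minimizes a functional built from $f_{t,t}$ and $h$, I will show it is eventually trapped in the sublevel set $B_{t,\lambda}(x)$, on which Assumption~(C) furnishes the needed growth control, and then send $\lambda\downarrow0$. I will repeatedly use two elementary properties of $\Lambda^\e_t$ and $\Lambda_t$: they are monotone in their last argument, and $\Lambda^\e_t(x,u;h+C)=\Lambda^\e_t(x,u;h)+C$ for constants $C$; in particular $\Lambda^\e_t(x,u;h)\ge\inf_{z}h(z)$ for every $(x,u)$, and the same for $\Lambda_t$.

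\emph{Confinement (the crux).} Fix $h\in B_\cV(\BX)$ and a selection $\eta^\e_t(\cdot;h)$. By definition of the selection and of $\cA^\e_t$,
\[ \cA^\e_t[h](x)=f_{t,t}\big(x,\eta^\e_t(x;h)\big)+\Lambda^\e_t\big(x,\eta^\e_t(x;h);h\big)\ge f_{t,t}\big(x,\eta^\e_t(x;h)\big)+\inf_{z}h(z), \]
so $f_{t,t}(x,\eta^\e_t(x;h))\le\cA^\e_t[h](x)-\inf_z h(z)$. Since $\cA^\e_t[h]\in B_\cV(\BX)$ (already proved) and $\cV$ is inf-finite, hence $\cV(x)\to\infty$ as $|x|\to\infty$, the right-hand side is $\le\lambda\cV(x)$ for all large $|x|$, for every fixed $\lambda\in(0,\lambda_0)$. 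Thus $\eta^\e_t(x;h)\in B_{t,\lambda}(x)$ once $|x|$ is large. The identical computation with $\cA_t$, $\Lambda_t$ confines $\eta_t(\cdot;h)$, and applied to $h_2$ it confines $\eta^\e_t(\cdot;h_2)$ and $\eta_t(\cdot;h_2)$ for part~(2).

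\emph{Finishing.} For (1), on the confinement region $f_{\t,t}(x,\eta^\e_t(x;h))\le\sup_{u\in B_{t,\lambda}(x)}f_{\t,t}(x,u)$, so the first inequality of Assumption~(C) gives $\limsup_{|x|\to\infty}f_{\t,t}(x,\eta^\e_t(x;h))/\cV(x)\le\lambda K_1$; letting $\lambda\downarrow0$ yields $\limsup\le0$, and with lower-boundedness (the costs lie in $B(\BX)$) we get membership in $B_\cV(\BX)$. The limiting statement is identical, since the first clause of (C) involves no $\e$. For (2), the control is $\eta^\e_t(\cdot;h_2)$ but the argument of $\Lambda^\e_t$ is $h_1$; as $h_1\in B_\cV(\BX)$, for each $\lambda>0$ there is $C_\lambda$ with $h_1\le\lambda\cV+C_\lambda$, so by monotonicity and the constant-shift property $\Lambda^\e_t(x,u;h_1)\le\Lambda^\e_t(x,u;\lambda\cV)+C_\lambda$. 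Evaluating at $u=\eta^\e_t(x;h_2)\in B_{t,\lambda}(x)$ and applying the second inequality of Assumption~(C) (whose $\sup_\e$ covers each fixed $\e$) gives $\limsup_{|x|\to\infty}\Lambda^\e_t(x,\eta^\e_t(x;h_2);h_1)/\cV(x)\le\lambda K_1$, hence $\le0$ as $\lambda\downarrow0$; lower-boundedness is free from $\Lambda^\e_t(\cdot;h_1)\ge\inf_z h_1(z)$. The $\Lambda_t$-version is the same, now invoking the bound for $\Lambda_t$ recorded in the Remark after Assumption~(C).

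The only substantive step is the confinement: it replaces the uncontrolled set of minimizers by the $f_{t,t}$-sublevel sets $B_{t,\lambda}(x)$ to which (C) applies, and it hinges on the already-established fact that the optimal value $\cA^\e_t[h]$ (resp.\ $\cA_t[h]$) belongs to $B_\cV(\BX)$ together with the trivial lower bound $\Lambda\ge\inf h$. A point to keep straight is that in (1) the selection minimizes $f_{t,t}+\Lambda^\e_t(\cdot;h)$, so the sublevel set that appears is that of $f_{t,t}$, matching the definition of $B_{t,\lambda}$, while the quantity to be controlled is $f_{\t,t}$; these are tied together exactly by Assumption~(C), which is precisely why (C) bounds $f_{\t,t}$ (and $\Lambda^\e_t(\cdot;\lambda\cV)$) over the $f_{t,t}$-sublevel sets. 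Everything beyond confinement is the monotonicity/constant-shift bookkeeping for $\Lambda$ and the two limits $\lambda\downarrow0$.
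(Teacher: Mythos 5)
Your proof is correct and follows essentially the same route as the paper: both arguments hinge on confining the minimizer $\eta^\e_t(x;h)$ (resp.\ $\eta_t(x;h)$) to the sublevel set $B_{t,\lambda}(x)$ for all large $|x|$ --- you do this via $f_{t,t}(x,\eta)\le \cA^\e_t[h](x)-\inf h$ and the previously proved fact $\cA^\e_t[h]\in B_\cV(\BX)$, while the paper bounds the same quantity by $f_{t,t}(x,u_0)+\Lambda_t(x,u_0;h)-\inf h+\delta$ for a fixed reference control $u_0$, which is the same estimate --- and then both conclude by the two inequalities of Assumption (C) together with the shift bound $\Lambda(x,u;h_1)\le\Lambda(x,u;\lambda\cV)+\sup_z[h_1(z)-\lambda\cV(z)]$ and the limit $\lambda\downarrow0$.
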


\begin{proof} 
	
	(1)	Recall the definitions  $$B_{t,\lambda}(x):=\{u\in\BU:f_{t,t}(x,u)\leq \lambda \cV(x)\}$$
	and 
	$$B_t(x):=\{u\in\BU: f_{t,t}(x,u)\leq  f_{t,t}(x,u_0)+\Lambda_t(x,u_0;h)-\inf_\BX h+\d\}.$$
	Since for each $u\in \BU$, $f_{t,t}(\cdot,u)\in   B_\cV (\BX)$, $B_t(x)\subset B_{t,\lambda}(x)$ for large $|x|$.
	By the definition of $\eta_t$ and $\eta_t^\e$, it follows that $$\eta_t^\e(x;h),\eta_t(x;h)\in B_{t,\lambda}(x) \text{ when $x$ is large }.$$
	As a consequence, for each $\lambda\in(0,\lambda_0)$,
	$$\limsup_{|x|\rightarrow\infty}\frac{f_{\t,t}(x,\eta_t(x;h))}{\cV(x)}\leq \limsup_{|x|\rightarrow\infty}\frac{\sup_{u\in B_{t,\lambda}(x)}f_{\t,t}(x,u)}{ \cV(x)}\leq \lambda K_1.$$
	By the arbitrariness of $\lambda\in(0,\lambda_0)$, it follows that $f_{\t,t}(x,\eta_t(x;h))\in   B_\cV (\BX)$.
	Similarly, we can prove that $f_{\t,t}(x,\eta^\e_t(x;h))\in   B_\cV (\BX)$.\ms
	
	(2)	Let $h_1,h_2\in   B_\cV (\BX)$. Then
	$$\eta_t(x;h_2),\eta^\e_t(x;h_2)\in B_{t,\lambda}(x)\text{ when $|x|$ is large. }$$
	
		By the definition of $ \Lambda_t(\cdot,u;\lambda\cV)$, we have 
		$-I_t(z;x,u)\leq \Lambda_t(x,u;\lambda\cV)-\lambda\cV(z).$
		Note that 
		\bel{useI}\Lambda_t(x,u;h)=\sup_{z\in\BX}[h(z)-I(z;x,u)]\leq \sup_{z\in\BX}[h(z)-\lambda\cV(z)]+\Lambda_t(x,u;\lambda\cV).\eel
	Therefore, $$\ba{ll}\Lambda_t(x,\eta_t(x;h_2);h_1)\ad\leq \sup_{z\in\BX}[h_1(z)-\lambda\cV(z)]+\Lambda_t(x,\eta_t(x;h_2);\lambda\cV) \\
	\ns\ad\leq \sup_{z\in\BX}[h_1(z)-\lambda\cV(z)]+\sup_{u\in B_{t,\lambda}(x)}\Lambda_t(x,u;\lambda\cV)\ea$$
	and for any $\lambda\in(0,\lambda_0)$, by  Assumption (C),
	$$\limsup_{|x|\rightarrow\infty}\frac{\Lambda_t(x,\eta_t(x;h_2);h_1)}{\cV(x)}\leq \lambda K_1.$$
	By the arbitrariness of $\lambda\in(0,\lambda_0)$, it follows that $\Lambda_t(x,\eta_t(x;h_2);h_1)\in \sB_\cV $ for any $h_1,h_2\in \sB_\cV $. Similarly, the result holds for $\Lambda^\e_t(x,\eta^\e_t(x;h_2);h_1)\in   B_\cV (\BX)$.

\end{proof}

\section{Time-inconsistent Equilibrium}\label{sec:TE}
In this section, we will derive the time-inconsistent equilibrium strategy step by step. The section will be divided into several subsections.
\subsection{Optimal Control for 1-step Transition}
In this subsection, we will review the 1-step optimal control problem with risk-sensitive cost. Consider $\{X^\e_1, X^\e_2\}$ with controlled transition probability 
$$\dbP(X^\e_2=j|X^\e_1=i;u)=q_t^\e(j;i,u)$$
Let 
$$\Lambda^\e (x,u;h)=\e\log\dbE\(\exp[\e^{-1}h(X_2)]\big|X_1=x;u\).$$

Given some function $\hat f :\BX\times \BU\mapsto\BR$ and $\hat g:\BX\mapsto\BR$, define the cost function
$$\hat V(x)=\hat J(x;\tilde u):=\e\log\dbE\(\exp\[\e^{-1}\big(\hat f(X_1,u(X_1))+\hat g(X_2)\big)\]\big|X_1=x\).$$

{\bf Problem-(CON)}: to find a $\tilde u^*\in\cU$ such that 
$$\hat J(x;\tilde u^*)=\inf_{ \tilde u\in\cU}\hat J(x;\tilde u).$$

By the definition of $\Lambda^\e$, we have
$$\hat J(x;\tilde u)=\hat f(x,u(x))+\Lambda^\e(x,u(x);\hat g).$$
As a result,
$$\hat V(x)=\inf_{u\in\BU}[\hat f(x,u)+\Lambda^\e(x,u;\hat g)]\text{ and } u^*(x)\in\argmin_{u\in\BU}[\hat f(x,u)+\Lambda^\e(x,u;\hat g)]$$
Note that the optimal strategy $u^*(\cdot)$ might not be unique. The existence of $\tilde u^*=u^*(\cdot)$ will be guaranteed  by the assumptions in the proof.

\ms
\subsection{Time-inconsistent Strategy}\label{eequilirium}

Now we  are ready to introduce the recursion process of finding the time-inconsistent equilibria. We start with the last step first and move backward to the first step. 
\ms 

{\it $T$-th step strategy.} In the last step, the control is determined by  solving a classical optimal control problem with discounting factor being $\t=T$.

{\bf Problem-$T$}: to find $\tilde u^{\e,*}_T\in\cU$ such that
$$J^\e_{T,T}(x;\tilde u_{T}^{\e,*})=\inf_{\tilde u\in\cU} J^\e_{T,T}(x;\tilde u).$$
By the definition of $\Lambda^\e_T$, one can see 
$$J^\e_{T,T}(x;\tilde u)=f_{T,T}(x,u)+\Lambda_T^\e(x,u;g_T).$$
Thus the optimal control in this step is in the following feedback form
 \bel{nequi}\ba{ll} u^{\e,*}_T(x)\ad\in\argmin_{u\in \BU}\left\{f_{T,T}(x,u)+\Lambda_T^\e(x,u;g_T)\right\}=\Box\eta_T^\e(x;g_T).\ea\eel
 By Assumption (B), the optimal feedback control must exist. The value function is 
$$V^\e_{T}(x)=J^\e_{T,T}(x;\tilde u_{T}^{\e,*})=\inf_{u\in \BU}\left\{f_{T,T}(x,u)+\Lambda_T^\e(x,u;g_T)\right\}=\cA_T^\e[g_T](x).$$
While the minimum point is not unique, let $\eta^\e_t(\cdot;g_T)$ be a choice of $\Box\eta_t^\e(\cdot;g_T)$.  We choose
\bel{nequiunique}u_T^{\e,*}(x)=\eta^\e_T(x;g_T).\eel

Given the optimal control we find this step, now for any $\t\in\dbT$, let
\bel{ThetaN}\Theta^\e_{\t,T}(x):=f_{\t,T}(x,\eta^\e_T(x;g_T))+\Lambda_T^\e(x,\eta^\e_T(x;g_T);g_\t)=\cH^\e_{\t,T}[g_\t](x,\eta^\e_T(x;g_T)).\eel
It is easy to see that
\bel{ThetaJ}\Theta^\e_{\t,T}(x)=J^\e_{\t,T}(x;\eta^\e_T(x;g_T)),\eel
i.e. $\Theta^\e_{\t,T}$ is the value of the cost function at time $T$ if we use the discounting factor $\t$ and the  feed-back control $\eta^\e_T(x;g_T)$. Note that $$\Theta^\e_{T,T}(x)=\inf_{\tilde u\in\cU} J^\e_{T,T}(x;\tilde u).$$

\ms

{\it $(T-1)$-th step strategy.}  In the $(T-1)$th step,  we know  $T$-step strategy is $\tilde u_T^*=\eta^\e_T(\cdot;g_T)$  defined by \eqref{nequiunique} under discounting factor $\t=T$. While in this step, the strategy is based on  the new discounting factor $\t=T-1$.  Thus we are solving the following optimal control problem.\ss

{\bf Problem-$(T-1)$}: to find $\tilde u^{\e,*}_{T-1}\in\cU$ such that
$$J^\e_{T-1,T-1}(x;\tilde u_{T-1}^{\e,*}\oplus \tilde u_{T}^{\e,*})=\inf_{\tilde u\in\cU} J^\e_{T-1,T-1}(x;\tilde u\oplus \tilde u^{\e,*}_T).$$

Note that 
$$J^\e_{T-1,T-1}(x;\tilde u\oplus \tilde u^{\e,*}_T)=f_{T-1,T-1}(x,u(x))+\Lambda^\e_{T-1}(x,u(x);J^\e_{T-1,T}(x;\tilde u_T^{\e,*}))$$
and by \eqref{ThetaJ}, $$\Theta^\e_{T-1,T}(x)=J^\e_{T-1,T}(x,\tilde u_T^{\e,*}).$$

Similarly we can take $\eta^\e_{T-1}(\cdot;\Theta^\e_{T-1,T})$, a possible  choice of $\Box \eta^\e_{T-1}(\cdot;\Theta^\e_{T-1,T})$ and let
\bel{n-1equi}\ba{ll} u^{\e,*}_{T-1}(x)\ad=\eta^\e_{T-1}(x;\Theta^\e_{T-1,T}).\ea\eel
 The value function 
$$V^\e_{T-1}(x)=J^\e_{T-1,T-1}(x;\tilde u_{T-1}^{\e,*}\oplus \tilde u_{T}^{\e,*})=\inf_{u\in \BU}\left\{f_{T-1,T-1}(x,u)+\Lambda_{T-1}^\e(x,u;\Theta^\e_{T-1,T})\right\}=\cA^\e_{T-1}[\Theta^\e_{T-1,T}](x).$$
Here  $\Lambda_{T-1}^\e(x,u;\Theta^\e_{T-1,T})$ is well-defined since $\Theta_{T-1,T}^\e\in   B_\cV (\BX)$ by Lemma \ref{BBcV}.

Now for any $\t\in\dbT$, let
\bel{ThetaN-1}\ba{ll}\Theta^\e_{\t,T-1}(x)\ad:=f_{\t,T-1}(x,\eta^\e_{T-1}(x;\Theta^\e_{T-1,T}))+\Lambda_{T-1}^\e(x,\eta^\e_{T-1}(x;\Theta^\e_{T-1,T});\Theta^\e_{T-1,T})\\
\ns\ad=\cH^\e_{\t,T-1}[\Theta^\e_{\t,T}](x,\eta^\e_{T-1}(x;\Theta^\e_{T-1;T})).\ea\eel
It is easy to see that
$$\Theta^\e_{\t,T-1}(x)=J^\e_{\t,T-1}(x;\tilde u_{T-1}^{\e,*}\oplus\tilde u_{T}^{\e,*}).$$
\ms

{\it $t$-th step strategy.}  Before $t$th step, it has been   already identified that $\tilde u_{t+1,\dbT}^*=\eta^\e_{t+1}(\cdot;\Theta^\e_{t+1,t+2})\oplus\cdots\oplus\eta^\e_{T}(\cdot;g_T)$. In this step, we are using the new discounting factor $\t=t$. Thus we are solving the following optimal control problem.

{\bf Problem-$t$}: to find $\tilde u^{\e,*}_t\in\cU$ such that
$$J^\e_{t,t}(x;\tilde u_{t}^*\oplus \tilde u_{t+1,\dbT}^{\e,*})=\inf_{\tilde u\in\cU} J^\e_{t,t}(x;\tilde u\oplus \tilde u_{t+1,\dbT}^{\e,*})$$

Similarly  we can take one choice among the possible multiple choices that
\bel{kequi}\ba{ll} u^{\e,*}_{t}(x)\ad=\eta^\e_{t}(x;\Theta^\e_{t;t+1})\ea\eel
and the value function 
$$V^\e_{t}(x)=J^\e_{t,t}(x;\tilde u_{t}^{\e,*}\oplus \tilde u_{t+1,\dbT}^{\e,*})=\inf_{u\in \BU}\left\{f_{t,t}(x,u)+\Lambda_{t}^\e(x,u;\Theta^\e_{t,t+1})\right\}=\cA_t^\e[\Theta^\e_{t,t+1}].$$

Now for any $\t\in\dbT$, let
\bel{Thetak}\Theta^\e_{\t,t}(x):=f_{\t,t}(x,\eta^\e_{t}(x;\Theta^\e_{t,t+1}))+\Lambda_{T-1}^\e(x,\eta^\e_{t}(x;\Theta^\e_{t,t+1});\Theta^\e_{\t,t+1})=\cH^\e_{\t,t}[\Theta_{\t,k+1}](x,\eta^\e_t(x;\Theta^\e_{t,t+1})).\eel
It is easy to see that
$$\Theta^\e_{\t,t}(x)=J^\e_{\t,t}(x;\tilde u_t^{\e,*}\oplus\cdots\oplus\tilde u_T^{\e,*}).$$
\ms

By recursively repeating such process until the first step, we get a $T$-step strategy $\eta_\dbT^\e=\eta^\e_1\oplus\cdots\oplus\eta^\e_T$  and a sequence of functions $\{\Theta^\e_{\t,t}:(\t,t)\in\dbT\times\dbT\}$ by the following recursions,
 
\bel{HJBmain}\left\{\ba{ll}\ns\ad\Theta^\e_{\t,t}(x)=\cH_{\t,t}^\e[\Theta^\e_{\t,t+1}](x,\eta^\e_t(x;\Theta_{t,t+1}^\e)),\q \t,t\in\dbT\\ 
\ns\ad \eta^\e_t(\cdot;\Theta_{t,t+1}^\e)\in\Box\eta^\e_t(\cdot;\Theta_{t,t+1}^\e)\\ 
\ns\ad \Theta^\e_{\t,T+1}(x)=g_\t(x).\ea\right.\eel

Similarly, we can construct 
 $T$-step strategy $\eta_\dbT=\eta_1\oplus\cdots\oplus\eta_T$ and a sequence of functions $\{\Theta_{\t,t}:(\t,t)\in\dbT\times\dbT\}$ by the following recursions,

\bel{HJBmain-0}\left\{\ba{ll}\ns\ad\Theta_{\t;t}(x)=\cH_{\t,t}[\Theta_{\t,t+1}](x,\eta_t(x;\Theta_{t,t+1})),\q \t,t\in\dbT\\ 
\ns\ad \eta_t(\cdot;\Theta_{t,t+1})\in\Box\eta_t(\cdot;\Theta_{t,t+1})\\ 
\ns\ad \Theta_{\t;T+1}(x)=g_\t(x).\ea\right.\eel

\begin{remark}
	{\rm (1) One can see that the construction of $\eta^\e_{\dbT}$ ($\eta^\e_{\dbT}$)  is in a reverse order. Moreover, if the choices $\eta_t^\e$ ($\eta_t$)  changes, 	$\eta_s^\e$ ($\eta_s$) for $s<t$ have to change correspondingly. \ms
		
		(2)	If $f_{\t,t}$ and $g_\t$ is independent of $\t$, i.e. the  time-consistent case, then $\cH^\e_{\t,t}[h](x,u)=\cA^\e_{t}[h](x)$ for any $u\in\Box\eta^\e(x;\Theta^\e_{t,t+1})$. Thus $\Theta^\e_{\t,t}=\Theta^\e_{t,t}$ for any $\t\in\dbT$
		and the recursion for the value function is   $V^\e_t(x)=\Theta^\e_{t,t}(x)$
		$$V^\e_t=\cA^\e_t[V^\e_{t+1}], \q\text{with}\q V^\e_{T+1}=g.$$
	One can see that the Hamiltonian recursion is independent of the choice of the optimal control	in each step now.

	}
\end{remark}

\bigskip

Now we are ready to introduce our first main theorem.

\begin{theorem} Under Assumptions {\rm (A),(B)} and {\rm (C)}, the followings hold.
	
	{\rm (1)} For any choice of $\eta^\e_\dbT:=\eta^\e_1\oplus\cdots\oplus\eta^\e_T$ constructed in Section \ref{eequilirium} ,
	the recursive sequence	$\{\Theta^\e_{\t,t}(\cdot):(\t,t)\in\dbT\times\dbT\}$  from \eqref{HJBmain} is well-defined in $  B_\cV (\BX)$. Moreover $\eta^\e_\dbT$ is a time-inconsistent $\e$-risk-sensitive  equilibrium.

	{\rm (2)} For any choice of $\eta_\dbT:=\eta_1\oplus\cdots\oplus\eta_T$ constructed in Section \ref{eequilirium} ,
	the recursive sequence	$\{\Theta_{\t,t}(\cdot):(\t,t)\in\dbT\times\dbT\}$  from \eqref{HJBmain-0} is well-defined in $  B_\cV (\BX)$. Moreover $\eta_\dbT$ is a time-inconsistent risk-sensitive equilibrium.
	
	{\rm (3)} Any time-inconsistent $\e$-risk-sensitive equilibrium   $\eta_\dbT$, coupled with $\Theta^\e_{\t,t}(x)=J^\e_{\t,t}(x;\eta^\e_{t,\dbT})$, solves \eqref{HJBmain}. 
	
   {\rm (4)}	Any time-inconsistent risk-sensitive equilibrium  $\eta_\dbT$, coupled with $\Theta_{\t,t}(x)=J_{\t,t}(x;\eta_{t,\dbT})$, solves \eqref{HJBmain-0}.

\end{theorem}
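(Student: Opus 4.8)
The backbone of the whole argument is a one-step dynamic-programming identity for the cost functional under an \emph{arbitrary} deterministic Markov policy, valid for every $\pi_t=\tilde u_t\oplus\pi_{t+1}\in\Pi$ and every $(\t,t)\in\dbT\times\dbT$:
\begin{equation*}
J^\e_{\t,t}(x;\pi_t)=f_{\t,t}(x,u_t(x))+\Lambda^\e_t\big(x,u_t(x);J^\e_{\t,t+1}(\cdot;\pi_{t+1})\big)=\cH^\e_{\t,t}\big[J^\e_{\t,t+1}(\cdot;\pi_{t+1})\big](x,u_t(x)),
\end{equation*}
with the convention $J^\e_{\t,T+1}\equiv g_\t$. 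The plan is to prove this first, by isolating the deterministic $s=t$ summand $f_{\t,t}(x,u_t(x))$ from the exponent in \eqref{cost} and conditioning on $X^\e_{t+1}$, so that the inner conditional exponential moment is exactly $\exp(\e^{-1}J^\e_{\t,t+1}(X^\e_{t+1};\pi_{t+1}))$; the definition \eqref{Lambdae} of $\Lambda^\e_t$ then closes the identity. Everything afterward is a consequence of this recursion together with $\Box\eta^\e_t(x;h)=\argmin_{u}\cH^\e_{t,t}[h](x,u)$.

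For part (1) I would argue by backward induction on $t$. Well-definedness in $B_\cV(\BX)$ is immediate: from $\Theta^\e_{\t,T+1}=g_\t\in B_\cV(\BX)$ (Assumption (B)), the inductive step expresses $\Theta^\e_{\t,t}$ as the sum of $f_{\t,t}(\cdot,\eta^\e_t(\cdot;\Theta^\e_{t,t+1}))$ and $\Lambda^\e_t(\cdot,\eta^\e_t(\cdot;\Theta^\e_{t,t+1});\Theta^\e_{\t,t+1})$, both in $B_\cV(\BX)$ by Lemma \ref{BBcV}(1) and (2). Feeding $u_t=\eta^\e_t(\cdot;\Theta^\e_{t,t+1})$ into the one-step identity and inducting again identifies $\Theta^\e_{\t,t}=J^\e_{\t,t}(\cdot;\eta^\e_{t,\dbT})$. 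Step-optimality \eqref{opt1} then drops out: reading \eqref{HJBmain} at $\t=t$ gives $J^\e_{t,t}(x;\eta^\e_{t,\dbT})=\Theta^\e_{t,t}(x)=\inf_u\cH^\e_{t,t}[\Theta^\e_{t,t+1}](x,u)\le\cH^\e_{t,t}[\Theta^\e_{t,t+1}](x,\tilde u(x))$, and the one-step identity rewrites the right-hand side as $J^\e_{t,t}(x;\tilde u\oplus\eta^\e_{t+1,\dbT})$.

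Part (3) is the converse, using the same two ingredients in reverse. Given any $\e$-equilibrium $\eta^\e_\dbT$, set $\Theta^\e_{\t,t}:=J^\e_{\t,t}(\cdot;\eta^\e_{t,\dbT})$; the one-step identity shows at once that this family satisfies the first and third lines of \eqref{HJBmain}, so only the middle (minimizing) line remains. Fixing $x$ and applying \eqref{opt1} with the constant test policy $\tilde u\equiv u$ turns step-optimality into $\cH^\e_{t,t}[\Theta^\e_{t,t+1}](x,\eta^\e_t(x))\le\cH^\e_{t,t}[\Theta^\e_{t,t+1}](x,u)$ for every $u\in\BU$, which is precisely $\eta^\e_t(x)\in\Box\eta^\e_t(x;\Theta^\e_{t,t+1})$. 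Hence $\Theta^\e$ solves \eqref{HJBmain}.

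For the limit statements (2) and (4) the scheme is verbatim once the $\e$-free one-step identity
\begin{equation*}
J_{\t,t}(x;\pi_t)=f_{\t,t}(x,u_t(x))+\Lambda_t\big(x,u_t(x);J_{\t,t+1}(\cdot;\pi_{t+1})\big)
\end{equation*}
is established for the limsup cost \eqref{cost-0}. This is where I expect the real difficulty to lie. Because $J_{\t,t}=\limsup_{\e\to0^+}J^\e_{\t,t}$, passing the limit through the nonlinear $\Lambda^\e_t$ requires two upgrades beyond what is recorded in Remark \ref{remarkA}: first, promoting the limsup of $J^\e_{\t,t+1}(\cdot;\pi_{t+1})$ to a genuine limit in the metric $w$ of $(B_\cV(\BX),w)$; and second, a joint-continuity statement $\Lambda^\e_t(x,u;h^\e)\to\Lambda_t(x,u;h)$ whenever $h^\e\to h$ in $B_\cV(\BX)$, rather than the fixed-$h$ convergence of \eqref{rate}. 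Both should be obtainable from the LDP apparatus — the uniform rate function in (A2), the exponential tightness supplied by the Lyapunov bounds (A1)/(A3), and Varadhan's equality \eqref{entropy} extended to $o(\cV)$-growth integrands (Theorem \ref{pret}) — inducted backward from $t=T+1$, since only finitely many compositions occur. Granting this limit recursion, the well-definedness of $\{\Theta_{\t,t}\}$, the identification $\Theta_{\t,t}=J_{\t,t}(\cdot;\eta_{t,\dbT})$, and the equivalence between \eqref{HJBmain-0} and step-optimality \eqref{opt2} follow exactly as in parts (1) and (3).
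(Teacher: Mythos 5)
Your proposal is correct and follows essentially the same route as the paper: the one-step identity you state first is exactly the recursion used in Section \ref{eequilirium} to construct the equilibrium, well-definedness comes from Lemma \ref{BBcV}, and step-optimality (resp.\ its converse for (3)--(4)) is read off from the $\argmin$ characterization. The limit-interchange issue you flag for parts (2) and (4) --- passing $\limsup_{\e\rightarrow0^+}$ through $\Lambda^\e_t$ along the backward induction so that $J_{\t,t}$ genuinely satisfies the $\Lambda_t$-recursion --- is real and is glossed over in the paper's own proof, but the tool you ask for is precisely Lemma \ref{uniformconvergencelemma} (together with Lemma \ref{corlambdah} for membership in $\sB_\cV$), which the paper only deploys later in Section \ref{sec:con}.
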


\begin{proof}
	
	(1) and (2).  By  Lemma  \ref{BBcV}, $\{\Theta^\e_{\t,t}(\cdot):(\t,t)\in\dbT\times\dbT\}$ and $\{\Theta_{\t,t}(\cdot):(\t,t)\in\dbT\times\dbT\}$ are well-defined in $  B_\cV (\BX)$.
	
	 By the construction process of $\eta^\e_\dbT$, one can see that 
	$$\ba{ll}\Theta^\e_{t,t}(x)\ad=\cH^\e_{t,t}[\Theta_{t,t+1}](x,\eta^\e_t(x;\Theta^\e_{t,t+1}))\\
	\ns\ad=\cA_t[\Theta_{t,t+1}](x)\\
	\ns\ad=\inf_{u\in\BU}[f_{t,t}(x,u)+\Lambda_t^\e(x,u;\Theta_{t,t+1}^\e)].\ea$$
	and $\Theta^\e_{t,t}(x)=J^\e_{t,t}(x;\tilde u^{\e,*}_{t,\dbT})$.
	The optimality \eqref{opt1}  holds directly. Thus $\eta^\e_\dbT$ is a time-inconsistent risk-sensitive $\e$-equilibrium. Similar argument can be applied to $\eta_\dbT$ as well.

	(3) and (4). If $\eta_\dbT$  is a time-inconsistent risk-sensitive equilibrium strategy, by the optimality \eqref{opt2}, $$u_t^*(\cdot)=\eta_t(\cdot;\Theta_{t,t+1})\in\Box\eta_t(\cdot;\Theta_{t,t+1}).$$
	By $\Theta_{\t,t}(x)=J_{\t,t}(x;\eta_{t,\dbT})$, it is easy to see that 
	$$\Theta_{\t;t}(x)=\cH_{\t,t}[\Theta_{\t,t+1}](x,\eta_t(x;\Theta_{t,t+1})),\q\t,t\in\dbT.$$
	Thus $\{\eta_t,\Theta_{\t,t}\}$ solves \eqref{HJBmain-0}. The similar results holds for $\eta_\dbT^\e$.
\end{proof}

\section{The Convergence of $\e$-equilibria}\label{sec:con}

In this section, we focus on the convergence of $\e$-equilibria as $\e\rightarrow 0^+$, i.e. whether the solutions of \eqref{HJBmain} converges to some solution of \eqref{HJBmain-0} as $\e\rightarrow 0^+$. We need the following two lemmas.

\begin{lemma}\label{uniformconvergencelemma}Under Assumptions {\rm (A)},     if $\{h^\e\}\in \sB_\cV$ and $h^\e\rightarrow h$ point-wisely,
	then $\Lambda_t^\e(x,u;h^\e)$	converges to $\Lambda_t(x,u;h)$ uniformly on any compact compact set of $\BU$. 
	\end{lemma}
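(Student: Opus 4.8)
The plan is to fix $t$ and $x$, fix a compact set $K\subset\BU$, and show $\sup_{u\in K}|\Lambda^\e_t(x,u;h^\e)-\Lambda_t(x,u;h)|\to0$ by sandwiching $\Lambda^\e_t(x,u;h^\e)$ between two quantities that both converge to $\Lambda_t(x,u;h)$ uniformly on $K$. Since $\{h^\e\}\in\sB_\cV$ and $h^\e\to h$ pointwise, Lemma \ref{lemmapoint}(2) gives $h\in   B_\cV (\BX)$ and $\d_\e:=w(h^\e,h)\to0$, whence $|h^\e-h|\leq\d_\e\cV$ and so $h-\d_\e\cV\leq h^\e\leq h+\d_\e\cV$ pointwise. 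By monotonicity of $h\mapsto\Lambda^\e_t(x,u;h)$ it then suffices to control $\Lambda^\e_t(x,u;h\pm\d_\e\cV)$ from above and below.

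First I would record two auxiliary uniform facts. (i) For any fixed $k\in   B_\cV (\BX)$, Remark \ref{remarkA}(2) upgrades the continuous convergence in (A2) to $\Lambda^{\e'}_t(x,u;k)\to\Lambda_t(x,u;k)$ uniformly in $u\in K$ as $\e'\to0$; I will apply this to $k=h$ at the shifted scales appearing below. (ii) The perturbation $\Lambda^{\e'}_t(x,u;\d_\e\cV)$ is negligible: since $h\mapsto\Lambda^{\e'}_t(x,u;h)$ is convex with $\Lambda^{\e'}_t(x,u;0)=0$, convexity gives $\Lambda^{\e'}_t(x,u;\d_\e\cV)\leq(\d_\e/\lambda_0)\Lambda^{\e'}_t(x,u;\lambda_0\cV)$ whenever $\d_\e\leq\lambda_0$, so it is enough to know $C_K:=\sup_{u\in K}\sup_{0<\e'\leq\e_1}\Lambda^{\e'}_t(x,u;\lambda_0\cV)<\infty$ for some $\e_1>0$, which yields $\sup_{u\in K}\Lambda^{\e'}_t(x,u;\d_\e\cV)\leq(\d_\e/\lambda_0)C_K\to0$.

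The key computation is a Hölder split of the log-moment functional: for conjugate $p,p'>1$ and any scale $\eta$, applying Hölder to $\sum_z e^{\eta^{-1}\phi}e^{\eta^{-1}\psi}q^\eta_t(z;x,u)$ yields $\Lambda^{\eta}_t(x,u;\phi+\psi)\leq\Lambda^{\eta/p}_t(x,u;\phi)+\Lambda^{\eta/p'}_t(x,u;\psi)$. Taking $\eta=\e$, $\phi=h$, $\psi=\d_\e\cV$ gives the upper estimate $\Lambda^\e_t(x,u;h^\e)\leq\Lambda^{\e/p}_t(x,u;h)+\Lambda^{\e/p'}_t(x,u;\d_\e\cV)$. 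The lower estimate is the delicate point, since a naive split would leave $h^\e$ at a shifted scale; to avoid this I would instead take $\eta=p\e$, $\phi=h^\e$, $\psi=\d_\e\cV$, so that the inequality reads $\Lambda^{p\e}_t(x,u;h^\e+\d_\e\cV)\leq\Lambda^{\e}_t(x,u;h^\e)+\Lambda^{p\e/p'}_t(x,u;\d_\e\cV)$, and then use $h^\e+\d_\e\cV\geq h$ with monotonicity to get $\Lambda^\e_t(x,u;h^\e)\geq\Lambda^{p\e}_t(x,u;h)-\Lambda^{p\e/p'}_t(x,u;\d_\e\cV)$. The trick is that the auxiliary scale ($\e/p$, resp. $p\e$) is placed on the fixed function $h$, whose Laplace functional still tends to $\Lambda_t(x,u;h)$ by fact (i) because the scale tends to $0$, while $h^\e$ is kept at scale exactly $\e$.

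Combining, on $K$ one obtains $\Lambda^{p\e}_t(x,u;h)-\Lambda^{p\e/p'}_t(x,u;\d_\e\cV)\leq\Lambda^\e_t(x,u;h^\e)\leq\Lambda^{\e/p}_t(x,u;h)+\Lambda^{\e/p'}_t(x,u;\d_\e\cV)$, and letting $\e\to0$ both outer terms converge uniformly on $K$ to $\Lambda_t(x,u;h)$ by facts (i) and (ii), which proves the claim. The step I expect to be the main obstacle is establishing the uniform bound $C_K<\infty$ in (ii): Assumption (A1) supplies $\sup_{\e'\leq\e_0}\Lambda^{\e'}_t(x,u;\lambda_0\cV)<\infty$ only for each fixed $u$, and one must promote this to a bound uniform over $K$. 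I would do this by noting that $\lambda_0\cV$ is bounded below, hence lies in $B(\BX)$, so (A2) and Remark \ref{remarkA}(2) apply to it directly and give $\Lambda^{\e'}_t(x,u;\lambda_0\cV)\to\Lambda_t(x,u;\lambda_0\cV)$ uniformly on $K$ toward a limit that is finite (by (A1)) and continuous, hence bounded, on $K$; uniform convergence for small $\e'$ then delivers $C_K$.
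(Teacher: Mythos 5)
Your strategy---sandwiching $h-\d_\e\cV\le h^\e\le h+\d_\e\cV$ via Lemma \ref{lemmapoint}(2) and then separating the fixed function $h$ from the vanishing perturbation $\d_\e\cV$---is genuinely different from the paper's proof, which instead truncates the \emph{state space}: it splits the sum defining $\Lambda^\e_t$ into the finite set $\{|z|\le m\}$, where pointwise convergence of $h^\e$ to $h$ is automatically uniform, and a tail $\{|z|>m\}$ whose contribution $D_m$ is driven to $-\infty$ by (A1) together with $\sup_\e h^\e\in B_\cV(\BX)$. However, your route has a genuine gap at its central computational step. H\"older's inequality applied to $\sum_z e^{\eta^{-1}\phi}e^{\eta^{-1}\psi}q^\eta_{t}(z;x,u)$ rescales only the exponents, not the transition kernel: it gives
$$\Lambda^\eta_t(x,u;\phi+\psi)\le \frac1p\,\eta\log\Big(\sum_{z}e^{\eta^{-1}p\phi(z)}q^\eta_{t}(z;x,u)\Big)+\frac1{p'}\,\eta\log\Big(\sum_{z}e^{\eta^{-1}p'\psi(z)}q^\eta_{t}(z;x,u)\Big)=\frac1p\Lambda^{\eta}_t(x,u;p\phi)+\frac1{p'}\Lambda^{\eta}_t(x,u;p'\psi),$$
\emph{not} $\Lambda^{\eta/p}_t(x,u;\phi)+\Lambda^{\eta/p'}_t(x,u;\psi)$. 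The object $\Lambda^{\eta/p}_t(x,u;\phi)$ is by definition built from the different kernel $q^{\eta/p}_{t}(\cdot;x,u)$, and nothing in the H\"older computation produces that kernel; the same defect invalidates your lower estimate. Consequently the outer terms of your sandwich are not quantities to which your fact (i) applies, and with the corrected split they converge (by (A2)) to $\sup_z[h(z)-I_t(z;x,u)/p]$, which strictly dominates $\Lambda_t(x,u;h)$ in general since $p>1$ and $I_t\ge0$; you would need an extra limit $p\downarrow 1$ and a justification for interchanging it with $\e\rightarrow0^+$.

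Two further points. First, $\lambda_0\cV\notin B_\cV(\BX)$ (its ratio to $\cV$ is $\lambda_0$, not $0$), so Remark \ref{remarkA} does not hand you the uniform-on-$K$ bound $C_K$ for $\Lambda^{\e'}_t(x,u;\lambda_0\cV)$; (A1) is pointwise in $u$ and promoting it to a bound uniform over $K$ requires a separate argument, exactly as you anticipated. Second, the part of your proposal that is sound---controlling $\Lambda^{\e}_t(x,u;\lambda\cV)\le(\lambda/\lambda_0)\Lambda^{\e}_t(x,u;\lambda_0\cV)$ by convexity at a \emph{fixed} scale $\e$---is essentially inequality \eqref{uniforminter-0} of the paper. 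The state-space truncation is the natural repair: on $\{|z|\le m\}$ the replacement of $h^\e$ by $h$ costs only $\sup_{|z|\le m}|h^\e-h|\rightarrow0$ with no H\"older step and no auxiliary exponent $p$, and the tail is handled once and for all by the Lyapunov bound, after which continuous convergence along arbitrary $u^\e\rightarrow u$ yields uniformity on compact subsets of $\BU$.
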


\begin{proof} 
	Let $u^\e\rightarrow u$. Without loss of generality, we assume that  that the uniform lower bound of $h^\e$ is 1 and write $\hat h=\sup_\e h^\e$ and $C_m=\sup_{x>m}(\hat h-\lambda_0\cV)$.  It is easy to see that
	$\lim_{m\rightarrow\infty}C_m=-\infty.$   Let $$D_m:=\limsup_{\e\rightarrow0^+}\e\log\(\sum_{z>m}\exp\left\{\e^{-1} \hat h(z)\right\}q^\e_{t}(z;x,u^\e) \).$$
	
	Note that 
	\bel{uniforminter-0}\ba{ll}\ds \e\log\(\sum_{z\geq m }\exp\left\{\e^{-1} \hat h(z)\right\}q^\e_{t}(z;x,u^\e) \)\ad\leq \e\log\(\sum_{z\geq m }\exp\left\{\e^{-1}  \lambda_0\cV(z)\right\}q^\e_{t}(z;x,u^\e) \)+C_m\\
	\ns\ad
	\leq \Lambda^\e(x,u^\e;\lambda_0\cV_0)+C_m, \ea\eel
Thus we have
		\bel{uniforminter}\lim_{m\rightarrow\infty}D_m=-\infty, \eel
Note that for any $m>0$, we have 
	\bel{uniformboundh}\limsup_{\e\rightarrow0+}\sup_{|x|\leq m}|h^\e-h|=0.\eel
	By \eqref{uniformboundh}, 
	we have  
	$$\ba{ll}\ns\ad\limsup_{\e\rightarrow0+}\Lambda_t^\e(x,u^\e;h^\e)\\
	\ns\ad \q=\limsup_{\e\rightarrow0+}\e\log\(\sum_{z\in\BX}\exp\{\e^{-1} h^\e(z)q^\e_{t}(z;x,u^\e)\}\)\\
	\ns\ad \q\leq \limsup_{\e\rightarrow0+}\e\log\(\sum_{z\leq M}\exp\left\{\e^{-1} (h^\e(x)-h(x))\right\}\exp\{h(z)\}q^\e_{t}(z;x,u^\e)+ \sum_{z>M}\exp\{\e^{-1} \hat h(z)\}q^\e_{t}(z;x,u^\e)\)\\
	\ns\ad\leq \limsup_{\e\rightarrow0+}\(\sup_{|x|\leq M}|h^\e-h|+\Lambda_t(x,u;h)\)\bigvee \limsup_{\e\rightarrow0+}\e\log\(\sum_{z>M}\exp\{\e^{-1} \hat h(z)\}q^\e_{t}(z;x,u^\e)\)\\
	\ns\ad\leq \Lambda_t(x,u;h)\vee D_m\ea$$
	On the other hand,
	$$\ba{ll}\ns\ad\liminf_{\e\rightarrow0+}\Lambda_t^\e(x,u^\e;h^\e)\\
	\ns\ad \q\geq\liminf_{\e\rightarrow0+}\e\log\(\sum_{z\leq M}\exp\{\e^{-1} h^\e(z)\}q^\e_{t}(z;x,u^\e)\}\)\\
	\ns\ad \q\geq \liminf_{\e\rightarrow0+}\e\log\(\sum_{z\leq M}\exp\left\{-\e^{-1} (h^\e(z)-h(z))\right\}\exp\{h(z)\}q^\e_{t}(z;x,u^\e)\)\\
	\ns\ad \q\geq -\limsup_{\e\rightarrow0+}\sup_{|x|\leq M}|h^\e-h|+\liminf_{\e\rightarrow0+}\e\log\(\sum_{z\in \BX}\exp\{h(z)\}q^\e_{t}(z;x,u^\e)-\sum_{z> M}\exp\{\hat h(z)\}q^\e_{t}(z;x,u^\e)\)\\
	\ns\ad\q\geq \Lambda_t(x,u;h)\vee D_m\ea$$
By the arbitrariness of $m$ and \eqref{uniforminter}, it follows that $$\lim_{\e\rightarrow0+}\Lambda_t^\e(x,u^\e;h^\e)=\Lambda_t(x,u;h).$$ 		
Since $u^\e\rightarrow u$ is arbitrary,  it is equivalent to  
	\bel{uniformconvergen h}\lim_{\e\downarrow0}\Lambda_t^\e(x,u;h^\e)=\Lambda_t(x,u;h),\q\text{ uniformly on any compact set of }\BU.\eel	
\end{proof}

The following lemma concerns with a stability result of the Hamiltonians.
\begin{lemma}\label{strategiesconvergence}	
	Under Assumptions {\rm (A), (B)} and {\rm (C)}, the followings hold.\ss
	
	{\rm (1)} Suppose  $\{h^\e\}\in   \sB_\cV $. Then  $\{\eta^\e_t(\cdot;h^\e)\}_{0<\e<\e_0}$ is compact in point-wise convergence sense and the limit of any convergent subsequence (as $\e\rightarrow0$) belongs to  $\Box\eta_t(\cdot;h)$. \ss
	
	{\rm (2)}  Let  $\{h_1^\e\},\{h_2^\e\}_\e\in   \sB_\cV $  and $h_1^\e\rightarrow h_1$ and  $h_2^\e\rightarrow h_2$ point-wisely.   For any convergent subsequence $\{\eta^{\e_n}_t(\cdot;h_2^{\e_n})\}$ ($\e_n\rightarrow0^+$) with limit $\eta_t^0(\cdot;h_2)$
	\bel{Hconvergence}\lim_{n\rightarrow\infty}\cH^{\e_n}_{\t,t}[h_1^{\e_n}](x;\eta^{\e_n}_t(x;h_2^{\e_n}))=\cH_{\t,t}[h_1](x;\eta_t(x;h_2)) \text{ for any }x\in\BX.\eel
	Moreover 
	$\{\cH^{\e}_{\t,t}[h_1^{\e}](\cdot;\eta^{\e}_t(\cdot;h_2^{\e}))\}\in   \sB_\cV .$
\end{lemma}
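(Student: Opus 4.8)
The plan is to treat the two assertions separately, using the uniform convergence of $\Lambda_t^\e$ on compacta (Lemma \ref{uniformconvergencelemma}) as the main engine in both.

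For part (1) I would first show that for each fixed $x$ the family $\{\eta_t^\e(x;h^\e)\}_{0<\e<\e_0}$ stays in one compact subset of $\BU$. Writing $u^\e:=\eta_t^\e(x;h^\e)$ and using $\Lambda_t^\e(x,\cdot;h^\e)\ge\inf_\BX h^\e\ge c_0$, where $c_0$ is the uniform lower bound of $\{h^\e\}\in\sB_\cV$, the optimality of $u^\e$ against a fixed test control $u_0$ gives
$$f_{t,t}(x,u^\e)\le f_{t,t}(x,u_0)+\Lambda_t^\e(x,u_0;h^\e)-c_0.$$
Since $\Lambda_t^\e(x,u_0;h^\e)\to\Lambda_t(x,u_0;h)$ by Lemma \ref{uniformconvergencelemma}, the right side is bounded above uniformly in $\e$, so $u^\e$ lies in a sublevel set $\{u:f_{t,t}(x,u)\le K_x\}$, compact by the inf-compactness in Assumption (B). As $\BX$ is countable, a diagonal extraction produces from any sequence $\e_n\to0$ a subsequence along which $\eta_t^{\e_n}(\cdot;h^{\e_n})$ converges pointwise, which is the asserted compactness. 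To identify a limit $\eta^0_t$ I pass to the limit in
$$f_{t,t}(x,u^{\e_n})+\Lambda_t^{\e_n}(x,u^{\e_n};h^{\e_n})\le f_{t,t}(x,u)+\Lambda_t^{\e_n}(x,u;h^{\e_n}),\qquad u\in\BU.$$
On the right, $\Lambda_t^{\e_n}(x,u;h^{\e_n})\to\Lambda_t(x,u;h)$; on the left, continuity of $f_{t,t}(x,\cdot)$ handles the cost term, while for the $\Lambda$ term I combine the uniform convergence of $\Lambda_t^{\e_n}(x,\cdot;h^{\e_n})$ to $\Lambda_t(x,\cdot;h)$ on the compact set containing $\{u^{\e_n}\}$ with the continuity of $\Lambda_t(x,\cdot;h)$ (Remark \ref{remarkA}(2)) to get $\Lambda_t^{\e_n}(x,u^{\e_n};h^{\e_n})\to\Lambda_t(x,\eta^0_t(x);h)$. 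The limiting inequality exhibits $\eta^0_t(x)$ as a minimizer, i.e. $\eta^0_t(x)\in\Box\eta_t(x;h)$.

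For the pointwise convergence \eqref{Hconvergence} in part (2), set $u_n:=\eta_t^{\e_n}(x;h_2^{\e_n})\to u^\ast:=\eta_t^0(x;h_2)$ and split $\cH_{\t,t}^{\e_n}[h_1^{\e_n}](x;u_n)=f_{\t,t}(x,u_n)+\Lambda_t^{\e_n}(x,u_n;h_1^{\e_n})$. The cost term converges to $f_{\t,t}(x,u^\ast)$ by continuity of $f_{\t,t}(x,\cdot)$ in $u$, and the $\Lambda$ term converges to $\Lambda_t(x,u^\ast;h_1)$ by the same compact-set argument as above (Lemma \ref{uniformconvergencelemma} plus continuity of $\Lambda_t(x,\cdot;h_1)$). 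Summing gives the claimed identity, whose right side equals $\cH_{\t,t}[h_1](x;\eta_t(x;h_2))$ since, by part (1), $u^\ast\in\Box\eta_t(x;h_2)$.

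It remains to prove $\{\cH_{\t,t}^{\e}[h_1^{\e}](\cdot;\eta_t^{\e}(\cdot;h_2^{\e}))\}\in\sB_\cV$, for which I mirror the proofs of Lemmas \ref{corlambdah} and \ref{BBcV} while tracking uniformity in $\e$. The key preliminary is that $\eta_t^\e(x;h_2^\e)\in B_{t,\lambda}(x)$ for all small $\e$ once $|x|$ is large: optimality against a fixed $u_0$ gives $f_{t,t}(x,\eta_t^\e(x;h_2^\e))\le f_{t,t}(x,u_0)+\Lambda_t^\e(x,u_0;h_2^\e)-c_0$, and since $\{\Lambda_t^\e(\cdot,u_0;h_2^\e)\}\in\sB_\cV$ by Lemma \ref{corlambdah}, the right side is $o(\cV(x))$ uniformly in $\e$, hence $\le\lambda\cV(x)$ for large $|x|$. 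On $B_{t,\lambda}(x)$, Assumption (C) controls both $\sup_u f_{\t,t}(x,u)$ and $\sup_{\e,u}\Lambda_t^\e(x,u;\lambda\cV)$; combining these with the entropy bound $\Lambda_t^\e(x,u;h_1^\e)\le\Lambda_t^\e(x,u;\lambda\cV)+\sup_\BX[\sup_\e h_1^\e-\lambda\cV]$ yields
$$\limsup_{|x|\to\infty}\frac{\sup_\e\cH_{\t,t}^{\e}[h_1^{\e}](x;\eta_t^{\e}(x;h_2^{\e}))}{\cV(x)}\le 2\lambda K_1,$$
and letting $\lambda\downarrow0$ shows $\sup_\e\cH_{\t,t}^{\e}[h_1^{\e}](\cdot;\eta_t^{\e}(\cdot;h_2^{\e}))\in B_\cV(\BX)$. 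The uniform lower bound follows from $\Lambda_t^\e\ge c_1$ (the uniform lower bound of $\{h_1^\e\}$) together with the lower boundedness of the cost $f_{\t,t}$, and each member lies in $B_\cV(\BX)$ directly by Lemma \ref{BBcV} applied at fixed $\e$.

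The main obstacle is the simultaneous passage to the limit in $\Lambda_t^{\e_n}(x,u_n;h_1^{\e_n})$, where the control argument $u_n$, the parameter $\e_n$, and the function $h_1^{\e_n}$ all move at once. Neither the defining pointwise limit \eqref{rate} (fixed $u,h$) nor mere continuity of $\Lambda_t(x,\cdot;h_1)$ alone suffices; the decisive point is the uniformity of the convergence on compact subsets of $\BU$ from Lemma \ref{uniformconvergencelemma}, which is exactly what decouples $u_n\to u^\ast$ from the $\e_n$- and $h^{\e_n}$-limits. A secondary technical point is securing the \emph{uniform-in-$\e$} lower bound needed for membership in $\sB_\cV$, which rests on the lower boundedness of the cost functions rather than on the compact-set estimates.
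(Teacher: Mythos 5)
Your proposal is correct and follows essentially the same route as the paper: compactness of the minimizers via sublevel sets of $f_{t,t}$, diagonal extraction over the countable state space, the uniform-on-compacta convergence of $\Lambda_t^\e$ from Lemma \ref{uniformconvergencelemma} combined with continuity of $\Lambda_t(x,\cdot;h)$ to pass to the limit with a moving control, and the inclusion $\eta_t^\e(x;h_2^\e)\in B_{t,\lambda}(x)$ for large $|x|$ together with Assumption (C) and the entropy bound to get membership in $\sB_\cV$. The only cosmetic difference is that you pass to the limit directly in the optimality inequality, whereas the paper first proves $\cA_t^{\e_n}[h^{\e_n}]\to\cA_t[h]$ and then identifies the limit point as a minimizer; the ingredients are identical.
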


\begin{proof}(1) 
	Recall $$\Box\eta_t^\e(x;h^\e)=\argmin_{u\in\BU}[f_{t,t}(x,u)+\Lambda_t^\e(x,u;h^\e)].$$
Let $$B_t(x):=\{u\in\BU:f_{t,t}(x,u)\leq f_{t,t}(x,u_0)+\sup_\e\Lambda_t^\e(x,u_0;\lambda\cV)+\sup_{\BX}[\sup_\e h^\e-\lambda\cV]-\inf_{\e} \inf_\BX h^\e\}$$
By \eqref{uniformconvergen h} and \eqref{heun}, for any fixed $x$, we can see that $\Box\eta_t^\e(x;h^\e)\subset B_t(x)$ and $B_t(x)$ is compact. Thus for any sequence of choices $\{\eta_t^{\e_n}(x;h^{\e_n})\}$ ($\e_n\rightarrow0$), there exists a convergent subsequence with limit $\eta_t^0(x;h)$.		 
Note that
 $$|\cA^\e_{t}[h^\e](x)-\cA_{t}[h](x)|\leq \sup_{u\in B_t(x)}|\Lambda_t^\e(x,u;h^\e)-\Lambda(x,u;h)|$$
Therefore by \eqref{uniformconvergen h},
$$\lim_{n\rightarrow\infty}\cA^{\e_n}_{t}[h^{\e_n}](x)=\cA_{t}[h](x).$$
Moreover,
$$\ba{ll}\ds\cA_{t}[h](x,u)\ad=\lim_{n\rightarrow\infty}\cA^{\e_n}_{t}[h^{\e_n}](x,u)|\\
\ns\ad =\lim_{n\rightarrow\infty}[f_{t,t}(x,\eta_t^{\e_n}(x;h^{\e_n}))-f_{t,t}(x,\eta_t(x;h))+\Lambda^\e_{t}(x,\eta_t^{\e_n}(x;h^{\e_n});h^\e)-\Lambda_{t}(x,\eta_t(x;h);h)]\\
\ns\ad\q+f_{t,t}(x,\eta_t(x;h))+\Lambda_{t}(x,\eta_t(x;h);h)\\
\ns\ad=f_{t,t}(x,\eta_t(x;h))+\Lambda_{t}(x,\eta_t(x;h);h).\ea$$
The last step holds since \eqref{uniformconvergen h} and $f_{t,t}$ is continuous. Thus $\eta_t(x;h)$ the minimum point of $\cA_{t}[h](x,\cdot)$ for fixed $x\in\BX$.  

Since $\BX$ has only countable many states, by the classical diagonalization method, one can extract a convergent subsequence $\eta_t^{\e_n}$ such that the convergence is true for any $x\in\BX$, i.e.
$$\lim_{\e_n\rightarrow 0}\eta_t^{\e_n}(x;h)=\eta_t(x;h),\q\text{for any }x\in\BX.$$

(2)
Note that $$\ba{ll}\cH^{\e_n}_{\t,t}[h_1^{\e_n}](x;\eta^{\e_n}_t(x;h_2^{\e_n}))\ad=f_{\t,t}(x,\eta^{\e_n}_t(x;h_2^{\e_n}))+\Lambda_t^{\e_n}(x,\eta^{\e_n}_t(x;h_2^{\e_n});h_1^{\e_n})\\
\ns\ad =f_{\t,t}(x,\eta^{\e_n}_t(x;h_2^{\e_n}))+\Lambda_t(x,\eta^{\e_n}_t(x;h_2^{\e_n});h_1)\\
\ns\ad\q+\Lambda_t^{\e_n}(x,\eta^{\e_n}_t(x;h_2^{\e_n});h_1^{\e_n})-\Lambda_t(x,\eta^{\e_n}_t(x;h_2^{\e_n});h_1)\ea$$
Since $f_{\t,t}$ is continuous,
$$\lim_{n\rightarrow 0}f_{\t,t}(x,\eta^{\e_n}_t(x;h_2^{\e_n}))=f_{\t,t}(x,\eta_t(x;h_2)).$$
By (A2), 
$$\lim_{n\rightarrow\infty}\Lambda_t(x,\eta^{\e_n}_t(x;h_2^{\e_n});h_1)=\Lambda_t(x,\eta_t(x;h_2);h_1).$$
By \eqref{uniformconvergen h},
$$\ba{ll}\ad\lim_{n\rightarrow 0}|\Lambda_t^{\e_n}(x,\eta^{\e_n}_t(x;h_2^{\e_n});h_1^{\e_n})-\Lambda_t(x,\eta^{\e_n}_t(x;h_2^{\e_n});h_1)|\\
\ns\ad\q\leq \lim_{n\rightarrow 0}\sup_{u\in B_t(x)}|\Lambda_t^{\e_n}(x,u;h_1^{\e_n})-\Lambda_t(x,u;h_1)|=0.\ea$$
Therefore, thus \eqref{Hconvergence} holds.

It is easy to see that $\{\sup_\e\cH^{\e}_{\t,t}[h_1^{\e}](x;\eta^{\e}_t(x;h_2^{\e}))\}_\e$ is uniformly bounded below. Now we will prove that $\sup_\e\cH^{\e}_{\t,t}[h_1^{\e}](x;\eta^{\e}_t(x;h_2^{\e}))\in   B_\cV (\BX)$.\ss

By (A3), $\eta_t^\e(x;h^\e_2)\in B'_t(x)$ 
where  $$B'_t(x):=\{u\in\BU:f_{t,t}(x,u)\leq f_{t,t}(x,u_0)+\sup_\e\Lambda_t^\e(x,u_0;\lambda\cV)+\sup_{\BX}[\sup_\e h_2^\e-\lambda\cV]-\inf_{\e} \inf_\BX h_2^\e\}$$ and  $B'_t(x)\subset B_{t,\lambda}(x)$ when $|x|$ is large,  where $ B_{t,\lambda}(x)$ is defined from Assumption (C) $$B_{t,\l}(x)=\{u\in\BU:f_{t,t}(x,u)\leq \lambda \cV(x) \}.$$
Simple calculation yields
$$\ba{ll}\cH^{\e}_{\t,t}[h_1^{\e}](x;\eta^{\e}_t(x;h_2^{\e}))\ad\leq \sup_{u\in B_{t,\l}(x)}[f_{\t,t}(x,u)+\Lambda_t^\e(x,u;h_1^\e)]\\
\ns\ad \leq  \sup_{u\in B_{t,\l}(x)}\(f_{\t,t}(x,u)+\Lambda_t^\e(x,u;\lambda\cV)+\sup_{z}[h_1^\e(z)-\lambda\cV(z)]\)\ea$$
By Assumption (B),
$$\limsup_{|x|\rightarrow\infty }\frac1{\cV(x)}\sup_\e\cH^{\e}_{\t,t}[h_1^{\e}](x;\eta^{\e}_t(x;h_2^{\e}))\leq K_1\lambda.$$
By the arbitrariness of $\lambda$, we have $\{\cH^{\e}_{\t,t}[h_1^{\e}](x;\eta^{\e}_t(x;h_2^{\e}))\}\in   \sB_\cV $.

\end{proof}

Now we are ready to establish the convergence of time-inconsistent  $\e$-equilibria  to time-inconsistent   equilibrium as $\e\rightarrow0^+$.

\begin{theorem}Under Assumptions {\rm (A),~(B)} and {\rm (C)}, 
	 as $\e\rightarrow 0^+$,  the sequence of time-inconsistent  $\e$-equilibria    $\{\eta_\dbT^\e\}$ is compact (in pointwise convergence sense) and the limit $\eta_\dbT$ of any convergent subsequence $\{\eta^{\e_n}_\dbT\}$ is  a time-inconsistent  equilibrium strategy. At the same time, $\{\Theta^{\e_n}_{\t,t}\}$ defined in \eqref{opt1} using $\eta^{\e_n}_\dbT$ converges to $\{\Theta_{\t,t}\}$ defined in \eqref{opt2} using $\eta_\dbT$ in $(  B_\cV (\BX),w)$.
\end{theorem}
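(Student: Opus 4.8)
The plan is to argue by backward induction on the time index $t$, running from the terminal layer $t=T+1$ down to $t=1$, extracting a single subsequence $\e_n\to0^+$ that works simultaneously at all layers (legitimate since the horizon $T$ is finite, so only finitely many nested extractions occur). The induction hypothesis at layer $t+1$ is: for every $\t\in\dbT$ one has $\{\Theta^\e_{\t,t+1}\}_\e\in\sB_\cV$, and along the current subsequence $\Theta^{\e_n}_{\t,t+1}\to\Theta_{\t,t+1}$ pointwise, where $\{\Theta_{\t,t+1}\}$ is the limit object built from the limiting strategies $\eta_{t+1},\dots,\eta_T$ via \eqref{HJBmain-0}. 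The base case $t=T+1$ is immediate, since $\Theta^\e_{\t,T+1}=g_\t=\Theta_{\t,T+1}$ is independent of $\e$ and lies in $B_\cV(\BX)$ by Assumption (B).

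For the inductive step at layer $t$, I would first treat the step strategy, which by construction is driven by the \emph{diagonal} entry $\Theta^\e_{t,t+1}$ (the case $\t=t$). Since $\{\Theta^\e_{t,t+1}\}\in\sB_\cV$ and $\Theta^{\e_n}_{t,t+1}\to\Theta_{t,t+1}$ pointwise, Lemma \ref{strategiesconvergence}(1) yields a further (diagonalized) subsequence along which $\eta^{\e_n}_t(\cdot;\Theta^{\e_n}_{t,t+1})$ converges pointwise to a legitimate choice $\eta_t(\cdot;\Theta_{t,t+1})\in\Box\eta_t(\cdot;\Theta_{t,t+1})$. With the strategy limit in hand, I would propagate the value functions: for each fixed $\t\in\dbT$, apply Lemma \ref{strategiesconvergence}(2) with $h_1^{\e_n}=\Theta^{\e_n}_{\t,t+1}$ (the off-diagonal row driving the cost) and $h_2^{\e_n}=\Theta^{\e_n}_{t,t+1}$ (the diagonal entry driving the strategy). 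Because the relevant strategy subsequence already converges, \eqref{Hconvergence} gives
$$\Theta^{\e_n}_{\t,t}(x)=\cH^{\e_n}_{\t,t}[\Theta^{\e_n}_{\t,t+1}]\big(x;\eta^{\e_n}_t(x;\Theta^{\e_n}_{t,t+1})\big)\longrightarrow \cH_{\t,t}[\Theta_{\t,t+1}]\big(x;\eta_t(x;\Theta_{t,t+1})\big)=\Theta_{\t,t}(x)$$
for every $x\in\BX$, while the same lemma guarantees $\{\Theta^\e_{\t,t}\}\in\sB_\cV$. This closes the induction and simultaneously establishes the pointwise compactness asserted in the statement.

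Once the induction reaches $t=1$, the extracted subsequence makes $\eta^{\e_n}_\dbT=\eta^{\e_n}_1\oplus\cdots\oplus\eta^{\e_n}_T$ converge pointwise to $\eta_\dbT=\eta_1\oplus\cdots\oplus\eta_T$, and the limiting pair $\{\eta_t,\Theta_{\t,t}\}$ satisfies exactly the recursion \eqref{HJBmain-0} (we have shown $\eta_t\in\Box\eta_t(\cdot;\Theta_{t,t+1})$ and $\Theta_{\t,t}=\cH_{\t,t}[\Theta_{\t,t+1}](\cdot,\eta_t(\cdot;\Theta_{t,t+1}))$). Hence part (2) of the equilibrium characterization theorem of Section \ref{sec:TE} identifies $\eta_\dbT$ as a time-inconsistent risk-sensitive equilibrium. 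For the claim that \emph{every} convergent subsequence has an equilibrium limit, I would instead start from an already pointwise-convergent $\eta^{\e_n}_\dbT\to\eta_\dbT$ and run the same backward recursion: Lemma \ref{strategiesconvergence}(1) forces each limiting $\eta_t$ into $\Box\eta_t(\cdot;\Theta_{t,t+1})$ and Lemma \ref{strategiesconvergence}(2) produces the matching $\Theta_{\t,t}$, so the limit again solves \eqref{HJBmain-0}. Finally, since each $\{\Theta^\e_{\t,t}\}\in\sB_\cV$ and $\Theta^{\e_n}_{\t,t}\to\Theta_{\t,t}$ pointwise, Lemma \ref{lemmapoint}(2) upgrades this to convergence in $(B_\cV(\BX),w)$, which is the last assertion.

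The main obstacle I anticipate is bookkeeping the subsequences through the coupled backward recursion in the presence of set-valued minimizers: the step strategy $\eta^\e_t$ is selected from the \emph{diagonal} $\Theta^\e_{t,t+1}$, while the value update for a general $\t$ uses the \emph{off-diagonal} $\Theta^\e_{\t,t+1}$, so one must feed two distinct $\sB_\cV$-sequences ($h_1^\e,h_2^\e$) into Lemma \ref{strategiesconvergence}(2) and ensure the strategy-selecting subsequence is precisely the one along which convergence has been arranged. The finiteness of $T$ is what renders the nested extraction harmless, and the membership $\{\Theta^\e_{\t,t}\}\in\sB_\cV$ at each layer — guaranteed by Lemma \ref{strategiesconvergence}(2) under Assumption (C) — is exactly what allows the pointwise limits to be promoted to $w$-limits via Lemma \ref{lemmapoint}(2) at the end.
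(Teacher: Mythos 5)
Your proposal is correct and follows essentially the same route as the paper: a backward recursion from the terminal step, extracting nested subsequences of the step strategies via Lemma \ref{strategiesconvergence}(1), propagating the pointwise convergence of $\Theta^{\e_n}_{\t,t}$ through the Hamiltonian via Lemma \ref{strategiesconvergence}(2), and upgrading to convergence in $(B_\cV(\BX),w)$ via Lemma \ref{lemmapoint}(2). Your explicit bookkeeping of the diagonal entry $\Theta^\e_{t,t+1}$ (driving the minimizer selection) versus the off-diagonal $\Theta^\e_{\t,t+1}$ (driving the value update), and your invocation of the equilibrium characterization theorem to identify the limit, are careful elaborations of steps the paper leaves implicit rather than a different argument.
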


\begin{proof}
	At $N$th step, we take a subsequence 
	$\{\eta_N^{\e_n}(\cdot;g_N)\}$ with limit $\eta_N$. Note that 
	$$\Theta^{\e_n}_{\t,N}(x)=\cH_{\t,t}^{\e_n}[g_\t](x;\eta_N^{\e_n}(x;g_N)).$$
	By Lemma \ref{strategiesconvergence}, we know that 
	$\Theta_{\t,N}^{\e_n}(x)$ is uniformly bounded below and $\sup_{\e_n}\Theta^{\e_n}_{\t,N}\in B_{\cV}(\BX)$ with limit 	$\Theta_{\t,N}^{0}$ in point-wise sense. By Lemma \ref{lemmapoint}, $\Theta_{\t,N}^{\e_n}$ converges to $\Theta_{\t,N}^{0}$ in  $(  B_\cV (\BX),w)$.
	
	At $(N-1)$th step, we take a subsequence of $\{\eta_{N-1}^{\e_n}(\cdot;\Theta_{N-1,N})\}$ (still written as the same sequence) with limit $\eta_{N-1}$.
	Note that 
	$$\Theta_{\t,N-1}^{\e_n}(x)=\cH_{\t,t}^{\e_n}[\Theta^{\e_n}_{\t,N}](x;\eta_{N-1}^{\e_n}(x;\Theta^{\e_n}_{N-1,N})).$$
	By Lemma \ref{strategiesconvergence},  
	 $\{\Theta_{\t,N-1}^{\e_n}\in B_{\cV}(\BX)\}\in  \sB_\cV $  and it converges to 	$\Theta_{\t,N-1}(x)$ point-wisely as $n\rightarrow\infty$. Thus $\{\Theta_{\t,N-1}^{\e_n}$ converges to $\Theta_{\t,N-1}$ in $(  B_\cV (\BX),w)$.	We repeat such process until the first step.  Then the proof is complete.
\end{proof}

The following corollary is obvious.

\begin{corollary}
	{\rm (1)} Under Assumptions {\rm (A),(B)} and {\rm (C)}, if the solution $\Theta$ of \eqref{opt2} is unique, then $\Theta_{\t,t}^\e$ converges to $\Theta_{\t,t}$ in $(  B_\cV (\BX),w)$, i.e.
	$$\lim_{\e\rightarrow0^+}w(\Theta_{\t,t}^\e,\Theta_{\t,t})=0.$$
	
	{\rm (2)} Under Assumptions {\rm (A)} and {\rm (B)}, if the cost functional is independent of the a discounting factor $\t$, i.e. time-consistent case, the solution $\Theta$ ($\Theta^\e$ resp.) is independent of the choices $\eta$ ($\eta^\e$ resp.) as well. As a result the solution $\Theta$  of \eqref{opt2} is unique and   $V_t^\e=\Theta^\e_{t,t}$ converges to $V_t=\Theta_{t,t}$ in $(  B_\cV (\BX),w)$ for each fixed $t$.
\end{corollary}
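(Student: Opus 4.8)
The plan is to deduce both parts directly from the preceding convergence theorem by a standard metric-space subsequence argument, using that $(B_\cV(\BX),w)$ is a metric space (Lemma \ref{lemmapoint}). No new estimate is needed; the content of the corollary is precisely the upgrade from the subsequential convergence that the theorem provides to convergence of the full net, and the role of the uniqueness hypothesis is to make that upgrade legitimate.

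For part (1), fix $(\t,t)\in\dbT\times\dbT$ and let $\e_k\downarrow 0$ be an arbitrary sequence. I would show that the whole sequence $\{\Theta^{\e_k}_{\t,t}\}$ converges to $\Theta_{\t,t}$ in $w$ by the Urysohn subsequence criterion: it suffices that every subsequence admit a further subsequence converging to $\Theta_{\t,t}$. So pass to an arbitrary subsequence of $\{\e_k\}$. By the compactness asserted in the convergence theorem, $\{\eta^\e_\dbT\}$ has a pointwise-convergent sub-subsequence whose limit $\eta_\dbT$ is a time-inconsistent equilibrium, and along this sub-subsequence the theorem yields $\Theta^{\e}_{\t,t}\to\Theta'_{\t,t}$ in $(B_\cV(\BX),w)$, where $\{\Theta'_{\t,t}\}$ is the sequence produced by $\eta_\dbT$ through \eqref{HJBmain-0}. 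By parts (3)--(4) of the equilibrium theorem, $\{\Theta'_{\t,t}\}$ is a solution of \eqref{opt2}; the uniqueness hypothesis then forces $\Theta'_{\t,t}=\Theta_{\t,t}$. Hence every subsequence has a further subsequence converging to the same limit $\Theta_{\t,t}$, so in the metric space $(B_\cV(\BX),w)$ we get $w(\Theta^{\e_k}_{\t,t},\Theta_{\t,t})\to0$. Since $\{\e_k\}$ was arbitrary, $\lim_{\e\to0^+}w(\Theta^\e_{\t,t},\Theta_{\t,t})=0$.

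For part (2) I would reduce the time-consistent case to part (1) by verifying the uniqueness hypothesis. When $f_{\t,t}$ and $g_\t$ do not depend on $\t$, the remark following \eqref{HJBmain-0} gives $\cH^\e_{\t,t}[h](x,u)=\cA^\e_t[h](x)$ for every $u$ in the argmin set $\Box\eta^\e_t(x;h)$, so $\Theta^\e_{\t,t}=\Theta^\e_{t,t}$ is independent of $\t$ and, crucially, independent of the particular selection $\eta^\e$; the recursion collapses to the choice-free Bellman recursion $V^\e_t=\cA^\e_t[V^\e_{t+1}]$ with terminal value $g$, and likewise $V_t=\cA_t[V_{t+1}]$ with $V_{T+1}=g$ in the limit. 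Since these backward recursions determine $\Theta^\e$ and $\Theta$ uniquely, the solution $\Theta$ of \eqref{opt2} is unique, and part (1) then gives $V^\e_t=\Theta^\e_{t,t}\to\Theta_{t,t}=V_t$ in $(B_\cV(\BX),w)$ for each fixed $t$.

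The whole argument is bookkeeping once the convergence theorem is available, and I do not expect any serious obstacle. The one point that needs genuine care is the choice-independence used in part (2): different selections $\eta^\e$ from $\Box\eta^\e_t$ must produce the same $\Theta^\e$, which is where the special structure of the time-consistent case enters. This is immediate from the identity $\cH^\e_{\t,t}[h](x,u)=\cA^\e_t[h](x)$ on the argmin set, but it is the only step that does not transfer verbatim from the general time-inconsistent setting.
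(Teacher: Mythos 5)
Your argument is correct: the paper offers no proof at all (it simply declares the corollary ``obvious'' as a consequence of the preceding convergence theorem), and your Urysohn-subsequence argument for part (1) plus the reduction of part (2) to part (1) via the choice-independence noted in the remark after \eqref{HJBmain-0} is exactly the standard deduction the author evidently intends. No gap; this matches the paper's (implicit) approach.
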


This similar results for stochastic diffusion in $\dbR^d$ is called  vanishing viscosity procedure in \cite{Flem2006}. While in our case, we are dealing with a discrete-time, countable-stated MDP. Compared to that of \cite{Flem2006}, our result has it own interesting feature because our problem  is time-inconsistent.

\section{ Illustrative Examples}\label{sec:exp}

In this section, we will present two illustrative examples.  First, we present an example in which the assumptions are possible to be verified.
\begin{example}\label{example1}	{\rm

	 Consider  a sequence of random variables defined by $$X^\e_{t+1}=X^\e_t+u+\xi^\e_t.$$
		where the control $u$ is taken in $\BU=\{-1,1\}$ and  the distribution function of $\xi_t^\e$ is
		$$\dbP(\xi^\e_t=x)=\left\{\ba{ll}\kappa\exp\{-\e^{-1}|x|^2\},\qq \ad\text{if } x\neq 0\\ [2mm]
		\ns\ds 1-\kappa\sum_{z\neq 0}\exp\{-\e^{-1}|z|^2\},\qq \ad\text{if } x=0.\ea \right.$$
		for some small $\kappa>0$.		
Simple calculation yields that
$$I(z;x,u)=(z-x-u)^2\text{ and
}\Lambda(x,u;h)=\sup_{z\in\BX}[h(z)-(z-x-u)^2)].$$	 
 Let $\cV(x)=|x|^2$. Take $\e_0$ small, (A1) holds. Since $\BU$ is compact, (A2) holds. Note that 
 $$\ba{ll}\Lambda^\e(x,u;\l\cV)\ad=\e\log\left\{\kappa\sum_{z\neq 0}\exp\{\e^{-1}(\lambda|x+z|^2-|z|^2)\}+\exp\{\e^{-1}\lambda|x|^2\}(1-\kappa\sum_{z\neq 0}\exp\{-\e^{-1}|z|^2\})\right\}\\
 \ns\ad\leq  \max\(\sup_{z}[\lambda |x+z|^2-|z|^2],\lambda|x|^2\)\leq \lambda K_1 \cV(x)\ea$$
 Therefore (A3) holds.

Let $f_{\t,t}(\cdot,u),g_\t(\cdot)\in   B_\cV (\BX)$. Since $\BU$ is compact, Assumption (B)  and (C) are trivial because of (A3).
Because the infimum or supremum can be attained, simple calculation shows that the Hamiltonians are
$$\left\{\ba{ll}\ns\ad \cH_{\t,t}[h](x,u)=f_{\t,t}(x,u)+\max_{z\in\BX}[h(z)-(z-x-u)^2)]\\
\ns\ad \cA_t[h](x)=\min_{u\in\BU}\(f_{t,t}(x,u)+\max_{z\in\BX}[h(z)-(z-x-u)^2]\).\ea\right.$$
We can easily get the recursion sequence defined in \eqref{HJBmain-0}.

If $f$ and $g$ are independent of the discounting factor $\t$. Then the value function $V_t$ satisfies
$$\left\{\ba{ll}\ad V_t(x)=\min_{u\in\BU}\[f_{t}(x,u)+\max_{z\in\BX}[V_{t+1}(z)-(z-x-u)^2]\],\\
\ns\ad V_T(x)=g(x).\ea\right.$$
This is the  time-inconsistent case which is equivalent to discrete  min-max control problem.\ss

 Now we assume the problem is $c_{\t,t}$ is exponential discounting, i.e. $c_{\t,t}(x,u)=\lambda^{t-\t}c(x,u)$ for some $\l\in(0,1)$. Suppose the problem was time-consistent with a (global) optimal strategy $\pi$. Due to the non-linear structure in the cost functional, in general 
 $J_{t-1,t}(x;\pi)\neq \l J_{t,t}(x;\pi)$. For example, one can see that 
 $J_{\t,T}(x)=e^{T-\t}g(x)$ and 
 $$\ba{ll}\l J_{T-1,T-1}(x;\pi)\ad=\l\(c(x,u)+\inf_{z}\{\l g(z)-(z-x-u)^2\}\) \\
 	\ns\ad\neq \l c(x,u)+\inf_{z}\{\l^2 g(z)-(z-x-u)^2\}=J_{T-2,T-1}(x;\pi).\ea $$
This is to say
$$\ba{ll}V_{t-1}(x)\ad=\inf_{u}\(c(x,u)+\inf_z\big[  J_{t-1,t}(x;\pi)-(z-x-u)^2\big] \)\\
\ns\ad\neq\inf_{u}\(c(x,u)+\inf_z\big[ \l J_{t,t}(x,\pi)-(z-x-u)^2\big] \)\\
\ns\ad =\inf_{u}\(c(x,u)+\inf_z\big[ \l V_t(x,\pi)-(z-x-u)^2\big] \). \ea$$
This contradicts to the global optimality of $\pi$ we supposed. Thus it is impossible for us to find an optimal strategy even if $\t$ is in an exponential form. Such result matches what we claimed previously in introduction and  motivates us to investigate time-inconsistent problems.
 
}

\end{example}

\begin{example}
	{\rm In a regime-switching  financial model,  the stock market may switch between two states (i.e. bull and bear) under some probability law. We assume that the investors' actions can effect the transition of stock market between different states, while might bring some serious consequence with rare probability. For example, due to the actions $u$ taken by investors, there appears  a third    state  (i.e. crisis) with a rare  occurrence  rate  which is proportional to the parameter $\e$. When $\e$ is small,  the rare occurrence may lead  neglectable effect to general investors, but a strong effect to risk-sensitive ones.
	
Let $X^u_t$, the state of stock market, be  a controlled Markov chain  with state space $\{1,2,3\}$.  The transition probability follows that 
$$q^\e(1;1,u)=1-p_1(u)-e^{-\frac {\lambda(1,u)}{\e}},q^\e(2;1,u)=p_1(u)-e^{-\frac {\lambda(1,u)}{\e}}, q^\e(3;1,u)=2e^{-\frac {\lambda(1,u)}{\e}}$$
$$q^\e(1;2,u)=p_2(u)-e^{-\frac {\lambda(2,u)}{\e}},q^\e(2;2,u)=1-p_2(u)-e^{-\frac {\lambda(2,u)}{\e}}, q^\e(3;2,u)=2e^{-\frac {\lambda(2,u)}{\e}}$$ 
$$q^\e(1;3,u)=1-p_3(u)-e^{-\frac {\lambda(3,u)}{\e}},q^\e(2;3,u)=p_3(u)-e^{-\frac {\lambda(3,u)}{\e}}, q^\e(3;3,u)=2e^{-\frac {\lambda(3,u)}{\e}}.$$
where  $\lambda(i,u)\geq 0$ and $u\in\{0,1\}$  represents whether the the investor takes action to the system.  Observed from the transition law, the first two states are general  and the third one is rarely existed. If $\lambda(i,0)=0$ and $\lambda (1,1),\lambda(2,1)>0$, the rare occurrence of  state  3 is   because of the investor's action. Now let suppose that  risk-sensitive decision-maker makes their decisions with a cost functional similar to \eqref{cost-0}.

When $\e\rightarrow 0^+$, simple calculation implies that 
$$\left\{\ba{ll}\ad  I(3;x,u)=\lambda (x,u),\qq I(1;3,u)=I(2;3,u)=0;\\[2mm]
\ad I(z;x,u)=0, \qq\text{if } x,z\in\{1,2\}.\ea\right.$$
Then one can see that
$$\ba{ll}\Lambda(x,u;h)=\max[h(1),h(2),h(3)-\lambda(x,u)]
\ea$$

If the third state was not existed, i.e. $\lambda(x,u)=0$ for any $x,u$, one can conclude that
$\Lambda(x,u;h)=\max[h(1),h(2),h(3)]$ is independent of $u$. This essentially says  risk-sensitive investor at time $t$ takes actions $u$ only  to minimize the cost
the cost $f_{t,t}(x,u)$  at the step only. Due to the possibility of rare state, risk-sensitive investors  have to change their strategies accordingly.

}
\end{example}

\section{Concluding Remarks}\label{sec:conrem}

We have explored the time-inconsistent risk-sensitive MDPs with countable-stated state space. Due to the time-inconsistency of the risk-sensitive cost function, the theory on the time-inconsistent equilibria and the convergence of value function  as $\e\rightarrow0^+$  have some unique interesting features, e.g. the convergence of $\e$-equilibria are required for the convergence of value functions.  Therefore, our results enrich the general theory of risk-sensitive MDPs and the time-inconsistent control problems. For our time-inconsistent risk-sensitive MDPs, a Hamiltonian recursion for each $\e>0$ has been derived and the convergence for the solution sequences as $\e\rightarrow0^+$ has been proved. An example is presented to show our assumptions are  general.

We still can see that the theory is  in its infancy and it is possible to be improved in several aspects. For example, can we conclude the similar results for general state space like $\BX=\BR^d$? The main difficulty  lies in the first-order regularity of the viscosity solutions  of non-linear PDEs. We hope to report it in the other paper.\ms

\paragraph{Acknowledgements}
The author is gratitude for the two anonymous referees for their helpful suggestions which have improved the manuscript a lot. The author would also like to thank Professor Fran\c{c}ois Dufour for his valuable comments on the early version of the manuscript.

\bibliographystyle{amsplain}

	\end{document}